\renewcommand{\S}{\Sigma}
\newcommand{\s}{\sigma}
\renewcommand{\b}{\beta}
\newcommand{\f}{\phi}
\newcommand{\G}{\Gamma}
\newcommand{\Gtilde}{\widetilde{\Gamma}}
\newcommand{\U}{\Upsilon}
\newcommand{\Co}{\mathbf{Comp}}
\newcommand{\F}{\mathbf{F}}
\newcommand{\Z}{\mathbb{Z}}
\newcommand{\N}{\mathbb{N}}
\newcommand{\E}{\mathbb{E}}
\newcommand{\Q}{\mathcal{Q}}
\theoremstyle{plain} \newtheorem{theorem}{Theorem}
\theoremstyle{plain} 
\theoremstyle{plain} \newtheorem{corollary}[theorem]{Corollary}
\theoremstyle{plain} \newtheorem{lemma}[theorem]{Lemma}
\theoremstyle{plain} 
\theoremstyle{plain} 
\theoremstyle{plain}  \newtheorem{definition}{Definition}
\theoremstyle{remark} 
\theoremstyle{remark} 
\begin{document}

\title{More on the phi = beta Conjecture and Eigenvalues of Random Graph Lifts}

\author{Edward Lui \\ Department of Computer Science \\ Cornell University \\ \texttt{luied@cs.cornell.edu} \and 
Doron Puder\\ Einstein Institute of Mathematics \\ Hebrew University, Jerusalem\\ \texttt{doronpuder@gmail.com}}

\maketitle

\begin{abstract}
Let $G$ be a connected graph, and let $\lambda_1$ and $\rho$ denote the spectral radius of $G$ and the universal cover of $G$, respectively. In \cite{Fri03}, Friedman has shown that almost every $n$-lift of $G$ has all of its new eigenvalues bounded by $O(\lambda_1^{1/2}\rho^{1/2})$. In \cite{LP10}, Linial and Puder have improved this bound to $O(\lambda_1^{1/3}\rho^{2/3})$. Friedman had conjectured that
this bound can actually be improved to $\rho + o_n(1)$ (e.g., see
\cite{Fri03,HLW06}).

In \cite{LP10}, Linial and Puder have formulated two new categorizations of formal words, namely $\phi$ and $\beta$, which assign a non-negative integer or infinity to each word. They have shown that for every word $w$, $\phi(w) = 0$ iff $\beta(w) = 0$, and $\phi(w) = 1$ iff $\beta(w) = 1$. They have conjectured that $\phi(w) = \beta(w)$ for every word $w$, and have run extensive numerical simulations that strongly suggest that this conjecture is true. This conjecture, if proven true, gives us a very promising approach to proving a slightly weaker version of Friedman's conjecture, namely the bound $O(\rho)$ on the new eigenvalues (see \cite{LP10}). 

In this paper, we make further progress towards proving this important conjecture by showing that $\phi(w) = 2$ iff $\beta(w) = 2$ for every word $w$. 
\\ \\
\noindent \emph{Keywords}: graph eigenvalues, random graph lifts, word maps, fixed points of symmetric group
\end{abstract}

\section{Introduction}

Let $G$ be a connected finite graph with oriented edges. An
\emph{$n$-lift} of $G$ is any graph that has an $n$-fold covering
map onto $G$. Equivalently, an $n$-lift of $G$ is any graph $H$ with
vertex set $V(H) = V \times \{1,...,n\}$, whose edge set $E(H)$ is
obtained as follows: for every oriented edge $(u,v) \in E(G)$, we
choose any permutation $\sigma_{(u,v)} \in S_n$ and add an
(undirected) edge between $(u,i)$ and $(v,\sigma_{(u,v)}(i))$ for $i
= 1, \ldots, n$.

Now, let $E(G) = \{g_1, \ldots, g_k\}$. Every choice of $k$
permutations in $S_n$ determines an $n$-lift of $G$. The random
graph model we consider is the probability space $L_n(G)$ of
$n$-lifts of $G$, with sample space ${S_n}^k$ and uniform
distribution. We note that in the case where $G$ is a single vertex
with $d/2$ self-loops (with $d$ even), this random graph model is
the same as the ``permutation model'' for random $d$-regular graphs.
For background on lifts and random lifts, see \cite{LR05, AL06,
ALM02, HLW06}.

Our main interest is in the eigenvalues of (the adjacency matrix of)
random lifts of graphs. Let $H$ be any $n$-lift of $G$. The
projection $\pi : V(H) \to V(G)$ defined by $\pi(u,i) = u$ is the
natural covering map from $H$ onto $G$. It can be easily verified
that if $f$ is an eigenfunction of $G$, then $f \circ \pi$ is an
eigenfunction of $H$ with the same eigenvalue as $f$. The $|V(G)|$
eigenvalues of $H$ corresponding to the $|V(G)|$ such eigenfunctions
are said to be \emph{old}, while the remaining $n|V(G)| - |V(G)|$
eigenvalues of $H$ are said to be \emph{new}. (We consider the
multiset of  eigenvalues with multiplicity, so that a new eigenvalue
can have the same value as an old eigenvalue.)

Let $\lambda_1$ and $\rho$ denote the spectral radius of $G$ and the
universal cover of $G$, respectively. For $H \in L_n(G)$, let
$\mu_{max}(H) = \max\{|\mu| : \mu$ is a new eigenvalue of $H\}$. In
\cite{Fri03}, Friedman showed that almost every $n$-lift $H \in
L_n(G)$ satisfies $\mu_{max}(H) \leq \lambda_1^{1/2}\rho^{1/2} +
o_n(1)$. In \cite{LP10}, Linial and Puder improved this bound to
$\mu_{max}(H) \leq \max\left\{1,
3\left(\frac{\rho}{\lambda_1}\right)^{2/3}\right\} \cdot
\lambda_1^{1/3}\rho^{2/3} + o_n(1)$ for almost every $n$-lift $H \in
L_n(G)$.

For the special case where $G$ is a single vertex with $d/2$
self-loops (i.e., for the permutation model of random $d$-regular
graphs, with $d$ even), $\mu_{max}(H)$ corresponds to $\lambda(H) :=
\max\{|\lambda_2|, |\lambda_n|\}$, where $\lambda_1 \geq \lambda_2
\geq \cdots \geq \lambda_n$ are the eigenvalues of $H$. There is a
large body of literature concerning $\lambda(H)$. These studies are
motivated by the fact that $\lambda(H)$ controls the expansion
properties of $H$ and the rate of convergence of the random walk on
$H$ to the stationary distribution (see \cite{HLW06}).

For the permutation model, Friedman's result in \cite{Fri03} states
that almost every random $n$-vertex $d$-regular graph $H$ satisfies
$\lambda(H) \leq \sqrt{2d\sqrt{d-1}} + o_n(1)$, which is a slight
improvement of the result of Broder and Shamir in \cite{BS87}.
Linial and Puder's result in \cite{LP10} states that almost every
random $n$-vertex $d$-regular graph $H$ satisfies $\lambda(H) \leq
O(d^{2/3})$, and more specifically, $\lambda(H) \leq (4d(d-1))^{1/3}
+ o_n(1)$ for $d \geq 107$.

For various models of random $d$-regular graphs (including this
specific permutation model), Friedman had shown that almost every
random $n$-vertex $d$-regular graph $H$ with $d \geq 3$ satisfies
$\lambda(H) \leq 2\sqrt{d-1} + o_n(1)$ (see \cite{Fri08}). The
Alon-Boppana bound (see \cite{Nil91,Fri03}) shows that $\lambda(H)
\geq 2\sqrt{d-1} - o_n(1)$ for every $n$-vertex $d$-regular graph
$H$, so Friedman's result cannot be improved significantly, if at
all.

The results of \cite{BS87}, \cite{Fri03}, and \cite{LP10} all use
the \emph{Trace Method}, which involves estimating the expected
value of the trace of a high power of the adjacency matrix of a
random graph. To estimate this expected value, Linial and Puder (in
\cite{LP10}) study \emph{word maps} associated with formal words
over the alphabet $\S = \S_k = \{{g_1}^{\pm1}, \ldots,
{g_k}^{\pm1}\}$. Let $\S^*$ denote the set of all finite words in
the alphabet $\S$. Given a word $w \in \S^*$, the \emph{word map}
associated with $w$ maps the $k$-tuple $(\sigma_1, \ldots, \sigma_k)
\in {S_n}^k$ to the permutation $w(\sigma_1, \ldots, \sigma_k) \in
S_n$, where $w(\sigma_1, \ldots, \sigma_k)$ is the permutation
obtained by replacing $g_1, \ldots, g_k$ with $\sigma_1, \ldots,
\sigma_k$ (respectively) in the expression for $w$.

The results of \cite{BS87}, \cite{Fri03}, and \cite{LP10} all
involve studying the probability that 1 (or any given point in $\{1,
\ldots, n\}$) is a fixed point of the permutation $w(\sigma_1,
\ldots, \sigma_k)$, when $\sigma_1, \ldots, \sigma_k \in S_n$ are
chosen randomly with uniform distribution. We are interested in how
close this probability is to the corresponding probability in the
case of a random permutation, i.e. to $\frac{1}{n}$.

Formally, following the notation of \cite{LP10}, for every
$w\in\S^*$ and $n\in\N$ we denote by $X_w^{(n)}$ a random variable
on $S_n^{\;k}$ which is defined by:
\begin{equation} \label{eq:xwn}
X_w^{(n)}(\sigma_{1},\ldots,\sigma_{k}) = \textrm{\# of fixed points
of } w(\sigma_{1}, \ldots , \sigma_{k}).
\end{equation} \\

We then have $\Phi_w(n)$ defined as $\Phi_w(n) =
\frac{\E(X_w^{(n)})-1}{n} = \frac{\E(X_w^{(n)})}{n} - \frac{1}{n}$
(where we always assume the uniform distribution on $S_n^{\;k}$).
$\frac{\E(X_w^{(n)})}{n}$ is the probability that 1 (or any given
point in $\{1, \ldots, n\}$) is a fixed point of the permutation
$w(\sigma_1, \ldots, \sigma_k)$. Thus, $\Phi_w(n)$ measures how much
this probability differs from $\frac{1}{n}$ for the word $w$. In a
paper from 94', Nica \cite{Nic94} showed that for a fixed word $w$
and large enough $n$, $\Phi_w(n)$ can be expressed as a rational
function in $n$ of degree $\leq 0$.

To study $\Phi_w(n)$, Linial and Puder (in \cite{LP10}) formulated
two new and separate categorizations of formal words, namely $\phi,
\beta : \S^* \to \Z_{\geq 0} \cup \{\infty\}$, which are invariant
under reduction of words. Thus, $\phi$ and $\beta$ are also
categorizations of the words in the free group $\F = \F_k$ generated
by $\{g_1, \ldots, g_k\}$. $\phi(w)$ is defined in accordance with
the degree of the rational function corresponding to $\Phi_w(n)$.
More specifically, it can be shown (see \cite{LP10}, Lemma 4) that
for every word $w \in \S^*$ and $n \geq |w|$, we have
\begin{equation} \label{eq:Phi}
\Phi_w(n) = \frac{\E(X_w^{(n)})}{n}-\frac{1}{n} = \sum_{i=0}^{\infty}a_i(w)\frac{1}{n^i}
\end{equation}
where the coefficients $a_i(w)$ are integers depending only on $w$,
and $|w|$ denotes the length of $w$. We then define:

\begin{equation} \label{eq:phi}
\phi(w) := \left\{ \begin{array}{ll} \textrm{the smallest integer
$i$ with }a_i(w) \neq 0 & \textrm{ if } \mathbb{E}(X_w^{(n)}) \not
\equiv 1 \\
\infty & \textrm{ if } \mathbb{E}(X_w^{(n)}) \equiv 1
\end{array} \right.
\end{equation} \\

Thus, $\phi(w)$ measures how much the above probability differs from
$\frac{1}{n}$ for the word $w$. The higher $\phi(w)$ is, the closer
the probability is to $\frac{1}{n}$.

On the other hand, $\beta(w)$ is defined combinatorially without
explicit reference to word maps and the symmetric group. (A review
of the definition of $\beta$ is given below in Section
\ref{sbs:beta}). Both $\phi$ and $\beta$ extend the dichotomy of
primitive vs. imprimitive words in $\F$ (recall that $1\neq w \in
\F$ is said to be \emph{imprimitive}, as an element of $\F$, if $w =
u^d$ for some $u \in \F$ and $d \geq 2$). In some sense, $\phi(w)$
and $\beta(w)$ can be thought of as quantifying the ``level of
primitivity'' of the word $w$.

In \cite{LP10}, Linial and Puder have conjectured that $\phi(w) =
\beta(w)$ for every word $w$. They have proven that $\phi(w) = 0$
iff $\beta(w) = 0$ iff $w$ reduces to the empty word, and $\phi(w) =
1$ iff $\beta(w) = 1$ iff $w$ is imprimitive as an element of $\F$.
These two facts also appear in \cite{BS87} and \cite{Fri03}, but not
in the explicit language of $\phi$ and $\beta$. Linial and Puder
have also made partial progress towards proving $\phi(w) = 2$ iff
$\beta(w) = 2$, which allowed them to obtain an improved eigenvalue
bound compared to the result in \cite{Fri03}. Furthermore, they have
run extensive numerical simulations, and the results suggest that
$\phi(w) = \beta(w)$ for every word $w$.

Friedman had conjectured that almost every $n$-lift $H \in L_n(G)$
of $G$ satisfies $\mu_{max}(H) \leq \rho + o_n(1)$ (e.g., see
\cite{Fri03,HLW06}). It is known that every $n$-lift $H \in L_n(G)$
of $G$ satisfies $\mu_{max}(H) \geq \rho - o_n(1)$ (see
\cite{Gre95,Fri03,HLW06}), so one cannot prove a significantly
stronger result. The conjecture that $\phi(w) = \beta(w)$, if proven
true, gives us a very promising approach to proving a slightly weaker
version of Friedman's conjecture, namely $\mu_{max}(H) \leq
O(\rho^{1-\epsilon}\lambda_1^\epsilon)$ almost surely for every
$\epsilon>0$ (see \cite{LP10}). Also, if proven true, the conjecture
may also significantly simplify the usage of the Trace Method in
proving new (or old) eigenvalue bounds in various contexts.

Our work mainly builds on the work of Linial and Puder in
\cite{LP10}, and our main result is

\begin{theorem}\label{ther:phi=beta=2}
$\phi(w) = 2 \Leftrightarrow \beta(w) = 2$ for every word $w \in \S^*$
\end{theorem}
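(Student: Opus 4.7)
The plan is to build on the partial progress of \cite{LP10}. Since $\phi(w) \le 1 \Leftrightarrow \beta(w) \le 1$ is already known and both $\phi$ and $\beta$ descend to $\F$, it suffices to work with primitive $w \in \F$, for which both $\phi(w), \beta(w) \ge 2$. The theorem then reduces to showing that, for such $w$, the coefficient $a_1(w)$ of $n^{-1}$ in the expansion (\ref{eq:Phi}) is nonzero if and only if $\beta(w) = 2$.

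First I would recall the combinatorial definition of $\beta$ from \cite{LP10}, which is phrased in terms of pointed $\S$-labelled Stallings core graphs associated to finitely generated subgroups of $\F$ in which $w$ sits as a non-primitive element, and extract the precise combinatorial condition characterizing $\beta(w) = 2$. Then, using Nica's expansion \cite{Nic94} together with the Frobenius-type enumeration of \cite{LP10}, I would express $\E(X_{w}^{(n)})$ as a sum of rational functions indexed by a finite collection of $\S$-labelled graphs $\G$, each term having an order in $n^{-1}$ controlled by a topological invariant of $\G$ (essentially $-\chi(\G)$). In this decomposition, the coefficient $a_1(w)$ gathers contributions only from graphs of very restricted topological type, and the candidate witnesses of $\beta(w)=2$ all sit within this restricted family.

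For the implication $\beta(w) = 2 \Rightarrow \phi(w) = 2$, I would take a minimal $\beta$-witness $\G_0$ and trace its contribution through the expansion, showing that it survives cancellation against other low-complexity graphs. The machinery already supplied in \cite{LP10} handles much of this direction (in the form that yields their improved eigenvalue bound), so the remaining work is to verify a short list of residual configurations by direct sign computation and to rule out accidental cancellations by using the pointedness of the core graphs.

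The converse implication $\phi(w) = 2 \Rightarrow \beta(w) = 2$ --- equivalently, $a_1(w) = 0$ whenever $\beta(w) \ge 3$ --- is where I expect the main obstacle to lie. Low-complexity labelled graphs still contribute to $a_1(w)$ even when no $\beta=2$ witness is available, so one must exhibit exact cancellation among the contributions of these spurious graphs. I would attempt this by classifying the spurious graphs according to the combinatorial structure of the $w$-loop inside them, and then constructing a sign-preserving pairing, or an inclusion-exclusion cancellation, among them; suitable candidates include folding operations on the core graph that change its Euler characteristic in a controlled way. Making this cancellation robust uniformly across all primitive $w$ with $\beta(w) \ge 3$, without being obstructed by degenerate or highly symmetric configurations, is where I expect essentially all of the technical difficulty to reside.
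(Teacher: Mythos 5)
Your overall framework --- expand $\E(X_w^{(n)})$ as a sum of rational functions indexed by quotient graphs, with each graph contributing at order $n^{-\chi(\G)}$, and then analyze the low-complexity graphs --- matches the paper's setup. However, there are a few substantive problems.

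First, a minor but real indexing error: once $w$ is known to be non-trivial and primitive, we already have $a_0(w)=a_1(w)=0$; the coefficient that distinguishes $\phi(w)=2$ from $\phi(w)\ge 3$ is $a_2(w)$, the coefficient of $n^{-2}$ in \eqref{eq:Phi}, not $a_1(w)$.

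Second, and more importantly, your proposal never identifies the mechanism that actually makes the cancellation work. The paper does \emph{not} argue by pairing up ``spurious'' graphs or by inclusion-exclusion on Euler characteristic. Instead it isolates a single sharp identity (Lemma \ref{lem:balancing}): $a_2(w) = |\{\G\in\Q_w : \chi(\G)=2, \G \text{ has type B}\}|$. Both implications of the theorem (and Corollary \ref{cor:E>1-1/n^2}) drop out of this one equality, so the ``two directions'' you treat separately are really one counting problem. The identity reduces to showing that the negative contribution of the single characteristic-$1$ graph $\Gtilde_w$ to $a_2(w)$, which equals $-\bigl(\binom{v_{\Gtilde_w}}{2} - \sum_j \binom{e^j_{\Gtilde_w}}{2}\bigr)$, is exactly canceled by the positive contributions of type-A characteristic-$2$ graphs, i.e.\ that $|\Q_{w,2,A}| = \binom{v_{\Gtilde_w}}{2} - \sum_j \binom{e^j_{\Gtilde_w}}{2}$. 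That number is $|\Co(\U)|$, the number of components of the ``pair graph'' $\U$ of $\Gtilde_w$ (since $\U$ is acyclic when $w$ is primitive), and LP10 already have a surjection $f:\Co(\U)\to\Q_{w,2,A}$. The entire new content is proving $f$ is \emph{injective}, and that is done not by a sign-pairing but by explicitly constructing the inverse $h=f^{-1}$. The construction requires a genuinely new technical tool absent from your sketch: a classification of components of $\U$ by cyclic repetitions in $w$ (coherent/non-coherent, overlapping/non-overlapping), and a recursive factorization $w^{(0)}, w^{(1)}, \ldots, w^{(N)}$ of $w_x$ into nested subwords $a_n, b_n$ that tracks how transitivity-induced implications between components propagate, terminating in one of three possible shapes for $f(C)$ (Figure-Eight, Theta, or Barbell). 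Reversing this factorization deterministically from the shape of a quotient graph $\G$ is what recovers the unique component $C$. Your ``folding operations on the core graph that change its Euler characteristic'' do not get at this structure, and ``verify a short list of residual configurations by direct sign computation'' will not close the gap, because the difficulty is precisely uniform over all $w$ with overlapping coherent repetitions and does not reduce to a finite check.
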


The $\phi(w) = \beta(w)$ conjecture is also interesting in other
aspects, such as its connection to word maps. For example, a
slightly stronger version of this conjecture made in \cite{LP10}
states that the first non-vanishing $a_i(w)$ in \eqref{eq:Phi} is in
fact positive. This yields that for every word $w$ and sufficiently
large $n$, $w(\sigma_1,\ldots,\sigma_k)$ has at least one fixed
point on average. A first result in this direction can be easily
inferred from \cite{Nic94}, where it is indirectly shown that
$(a_0(w),a_1(w))\geq(0,0)$ lexicographically, whence $\E(X_w^{(n)})
\geq 1-O(\frac{1}{n}$). Our proof of Theorem \ref{ther:phi=beta=2}
actually shows that whenever $a_0(w)=0$ and $a_1(w)=0$, we have
$a_2(w)\geq0$, and thus obtain:

\begin{corollary}\label{cor:E>1-1/n^2}
For every fixed word $w\in \S^*$,
\begin{displaymath}
\E(X_w^{(n)}) \geq 1-O\left(\frac{1}{n^2}\right)
\end{displaymath}
\end{corollary}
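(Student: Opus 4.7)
My plan is to deduce Corollary~\ref{cor:E>1-1/n^2} almost directly from Theorem~\ref{ther:phi=beta=2} by unpacking its proof. Multiplying \eqref{eq:Phi} by $n$ gives
\begin{equation*}
\E(X_w^{(n)}) \;=\; 1 + n\,\Phi_w(n) \;=\; 1 + a_0(w)\,n + a_1(w) + \frac{a_2(w)}{n} + \frac{a_3(w)}{n^2} + \cdots
\end{equation*}
for $n \geq |w|$. Since $X_w^{(n)} \geq 0$ always, the finitely many values $n < |w|$ can be absorbed into a word-dependent implicit constant, so it suffices to establish the bound for $n$ large.

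I would then split on the value of $\phi(w)$. If $\phi(w) = 0$ then $w$ reduces to the empty word, so $X_w^{(n)} \equiv n$ and the claim is trivial. If $\phi(w) = 1$, i.e.\ $a_0(w) = 0$ and $a_1(w) \neq 0$, the lexicographic estimate $(a_0(w), a_1(w)) \geq (0,0)$ inferred in the excerpt from Nica \cite{Nic94} forces $a_1(w) \geq 1$, whence $\E(X_w^{(n)}) \geq 2 + O(1/n)$, which is much stronger than needed. In the remaining case $\phi(w) \geq 2$, one has $a_0(w) = a_1(w) = 0$ and
\begin{equation*}
\E(X_w^{(n)}) \;=\; 1 + \frac{a_2(w)}{n} + \frac{a_3(w)}{n^2} + \cdots,
\end{equation*}
so the required bound reduces to the single assertion $a_2(w) \geq 0$.

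Thus the entire corollary hinges on the quantitative refinement
\begin{equation*}
a_0(w) = a_1(w) = 0 \;\Longrightarrow\; a_2(w) \geq 0,
\end{equation*}
which strengthens the $\beta(w) = 2 \Rightarrow \phi(w) = 2$ direction of Theorem~\ref{ther:phi=beta=2} (that only asserts $a_2(w) \neq 0$). This is the main obstacle, and my plan is to revisit the proof of Theorem~\ref{ther:phi=beta=2} and show that it actually produces an \emph{explicit formula} for $a_2(w)$ as a sum of manifestly nonnegative contributions whenever $a_0(w) = a_1(w) = 0$. Following the combinatorial approach of \cite{LP10}, I would expect $\E(X_w^{(n)})$ to decompose as a sum indexed by certain graph-morphism or coset data associated with $w$, with each term of the form $c_H \cdot n^{-r(H)}$ for some $c_H \geq 0$; the task is then to identify the precise subfamily of objects contributing at order $1/n$, and to verify that this contribution is a sum of \emph{positive} integers rather than a signed combination that merely happens to be nonzero.
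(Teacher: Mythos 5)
Your reduction to the single assertion $a_2(w)\geq 0$ when $a_0(w)=a_1(w)=0$, with the $\phi(w)\in\{0,1\}$ cases dispatched via Nica's lexicographic bound, is exactly the paper's route, and your prediction that the proof of Theorem~\ref{ther:phi=beta=2} yields an explicit nonnegative count for $a_2(w)$ is borne out: Lemma~\ref{lem:balancing} shows that the contribution of $\Gtilde_w$ cancels that of the type-A characteristic-2 quotients, leaving $a_2(w)=\bigl|\{\G\in\Q_w : \chi(\G)=2,\ \G\ \text{type-B}\}\bigr|\geq 0$. So the proposal is correct and follows essentially the same approach as the paper.
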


For those already familiar with the paper \cite{LP10}, we briefly
mention some aspects of our proof. For a word $w \in \S^*$ such that
$\beta(w) \geq 2$, recall that there exists a natural surjective
function $f$ from the (connected) components of some graph
$\Upsilon$ to the quotients in $\Q_w$ that have characteristic 2 and
type A (see \cite{LP10}). Linial and Puder believed that this
function is also injective, which would show that $\phi(w) = 2$ iff
$\beta(w) = 2$. In this paper, we follow this strategy and prove
that $f$ is indeed injective. This is done by recursively factoring
the word $w$ into finer and finer pieces, which allows us to analyze
the quotient graphs in $\Q_w$.

In Section \ref{sec:review} we review the definition of $\b(\cdot)$
and the established connections between $\b(\cdot)$ and $\f(\cdot)$.
In Section \ref{sec:proof}, after restoring the construction of the
aforementioned function $f$ (which first appeared in \cite{LP10}),
we show in Subsection \ref{sbs:injectivity} that it is indeed
injective, thereby proving Theorem \ref{ther:phi=beta=2} and
Corollary \ref{cor:E>1-1/n^2}.

\section{Review of \texorpdfstring{$\phi$}{phi} and of \texorpdfstring{$\beta$}{beta}} \label{sec:review}

In this section, we review some concepts and terminology from
\cite{LP10}, and also introduce some new terminology for
convenience.

\subsection{The Definition of \texorpdfstring{$\beta$}{beta}}
\label{sbs:beta}

We begin by describing some terminology needed to define $\beta(w)$
for any word $w \in \S^*$. Fix any word $w =
g_{i_1}^{\alpha_1}g_{i_2}^{\alpha_2}\ldots
g_{i_{|w|}}^{\alpha_{|w|}} \in \S^*$, where $i_1, i_2, \ldots,
i_{|w|} \in \{1, 2, \ldots, k\}$ and $\alpha_1, \alpha_2, \ldots,
\alpha_{|w|} \in \{-1, 1\}$. We denote by $T_w$ a path graph,
consisting of $|w|+1$ labeled vertices and $|w|$ directed and
labeled edges. The vertices are labeled $s_0,\ldots,s_{|w|}$. For
each $1\leq j \leq |w|$ there is an edge connecting $s_{j-1}$ and
$s_j$, labeled $i_j$ and directed according to $\alpha_j$. We shall
call the resulting graph the \emph{open trail} of $w$. E.g., for the
word $w = g_1g_2g_2{g_2}^{-1}g_3g_2g_1^{-1}$, the open trail $T_w$
of $w$ is the following directed edge-labeled graph:

\[\xy
(0,0)*+{s_0}="s0";
(18,0)*+{s_1}="s1";
(36,0)*+{s_2}="s2";
(54,0)*+{s_3}="s3";
(72,0)*+{s_4}="s4";
(90,0)*+{s_5}="s5";
(108,0)*+{s_6}="s6";
(126,0)*+{s_7}="s7";
{\ar^{1} "s0";"s1"};
{\ar^{2} "s1";"s2"};
{\ar^{2} "s2";"s3"};
{\ar_{2} "s4";"s3"};
{\ar^{3} "s4";"s5"};
{\ar^{2} "s5";"s6"};
{\ar_{1} "s7";"s6"};
\endxy
\]

The definition of $\beta(w)$ and analysis of $\phi(w)$ both rely on
the notion of quotient graphs of $T_w$. A quotient graph $\G$ of
$T_w$ corresponds to a partition of  $s_0,\ldots,s_{|w|}$. The
vertices of $\G$ correspond to the blocks of the partition, and
there is a $j$-labeled directed edge from the block $U$ to the block
$V$ whenever there is some $s_h\in U$ and $s_k\in V$ such that there
is a $j$-labeled edge in $T_w$ from $s_h$ to $s_k$. We write $s_h
\stackrel[\G]{}{\equiv} s_l$ (or simply $s_h \equiv s_l$ if $\G$ is
clear from the context) whenever $s_h$ and $s_l$ belong to the same
block in the partition corresponding to $\G$.

Among all quotient graphs of $T_w$ we are interested in those
satisfying two conditions. We first demand that the trail of $w$ in
the quotient be closed (the importance of this will be clear in the
analysis of $\phi(w)$). We further demand that no two $j$-labeled
edges share the same origin or the same terminus. This, in turn,
will guarantee that if we focus on paths in the quotient that start
in some fixed vertex, then any two different paths will correspond
to different words in $\S^*$. Formally, we define

\begin{definition} \label{def:realizable}
A \textbf{realizable quotient graph} of $T_w$ is a quotient graph
(or, equivalently, a partition of the set $\{s_0,\ldots,s_{|w|}\}$)
such that the following conditions hold:
 \begin{enumerate}
   \item $s_0 \equiv s_{|w|}$
   \item Whenever $i_h=i_l$ and $\alpha_h=\alpha_l$, we have $s_{h-1}\equiv s_{l-1} \iff s_h\equiv s_l$
   \item Whenever $i_h=i_l$ and $\alpha_h=-\alpha_l$, we have $s_{h-1}\equiv s_l \iff s_h\equiv s_{l-1}$
 \end{enumerate}
 We denote by $\Q_w$ the set of all realizable quotient graphs of $T_w$.
\end{definition}

To illustrate, we draw (Figure \ref{fig:q_w}) all the realizable
quotient graphs of the commutator word in $\F_2$.
\begin{figure}[h]
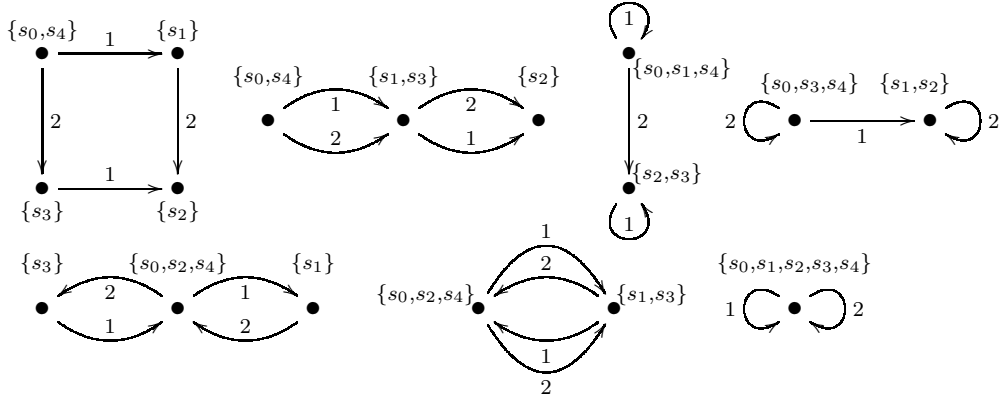

\begin{center}
\xy
(0,40)*+{\bullet}="s0"+(0,3)*{\scriptstyle \{s_0,s_4\}};
(18,40)*+{\bullet}="s1"+(0,3)*{\scriptstyle \{s_1\}};
(18,22)*+{\bullet}="s2"+(0,-3)*{\scriptstyle \{s_2\}};
(0,22)*+{\bullet}="s3"+(0,-3)*{\scriptstyle \{s_3\}};
{\ar^{1} "s0";"s1"};
{\ar^{2} "s1";"s2"};
{\ar^{1} "s3";"s2"};
{\ar^{2} "s0";"s3"};
%
(30,31)*+{\bullet}="r0"+(0,6)*{\scriptstyle \{s_0,s_4\}};
(48,31)*+{\bullet}="r1"+(0,6)*{\scriptstyle \{s_1,s_3\}};
(66,31)*+{\bullet}="r2"+(0,6)*{\scriptstyle \{s_2\}};
{\ar@/^1pc/_{1} "r0";"r1"};
{\ar@/^1pc/_{2} "r1";"r2"};
{\ar@/_1pc/^{1} "r1";"r2"};
{\ar@/_1pc/^{2} "r0";"r1"};
%
(78,40)*+{\bullet}="t0"+(7,-2)*{\scriptstyle \{s_0,s_1,s_4\}};
(78,22)*+{\bullet}="t1"+(5,2)*{\scriptstyle \{s_2,s_3\}};
{\ar@(ul,ur)_{1} "t0";"t0"};
{\ar^{2} "t0";"t1"};
{\ar@(dl,dr)^{1} "t1";"t1"};
%
(100,31)*+{\bullet}="u0"+(2,5)*{\scriptstyle \{s_0,s_3,s_4\}};
(118,31)*+{\bullet}="u1"+(-2,5)*{\scriptstyle \{s_1,s_2\}};
{\ar@(ul,dl)_{2} "u0";"u0"};
{\ar_{1} "u0";"u1"};
{\ar@(ur,dr)^{2} "u1";"u1"};
%
(0,6)*+{\bullet}="v0"+(0,6)*{\scriptstyle \{s_3\}};
(18,6)*+{\bullet}="v1"+(0,6)*{\scriptstyle \{s_0,s_2,s_4\}};
(36,6)*+{\bullet}="v2"+(0,6)*{\scriptstyle \{s_1\}};
{\ar@/^1pc/_{1} "v1";"v2"};
{\ar@/^1pc/_{2} "v2";"v1"};
{\ar@/_1pc/^{1} "v0";"v1"};
{\ar@/_1pc/^{2} "v1";"v0"};
%
(58,6)*+{\bullet}="w0"+(-7,2)*{\scriptstyle \{s_0,s_2,s_4\}};
(76,6)*+{\bullet}="w1"+(5,2)*{\scriptstyle \{s_1,s_3\}};
{\ar@/^2pc/^{1} "w0";"w1"};
{\ar@/_1pc/_{2} "w1";"w0"};
{\ar@/^1pc/^{1} "w1";"w0"};
{\ar@/_2pc/_{2} "w0";"w1"};
%
(100,6)*+{\bullet}="x"+(0,6)*{\scriptstyle \{s_0,s_1,s_2,s_3,s_4\}};
{\ar@(ul,dl)_{1} "x";"x"};
{\ar@(ur,dr)^{2} "x";"x"};
\endxy
\end{center}
\caption{The set $\Q_w$ of realizable quotient graphs when $w=g_1g_2g_1^{-1}g_2^{-1}\in\F_2$.} \label{fig:q_w}
\end{figure}

We next concentrate on the number of pairs of $s_i$'s that should be
merged in order to yield a specific quotient graph $\G\in \Q_w$. We
say that $\G$ \emph{is generated by the set of pairs}
$\{\{s_{j_1},s_{k_1}\},\ldots,\{s_{j_r},s_{k_r}\}\}$ if the
partition corresponding to $\G$ is the finest partition in which
$s_{j_i}\equiv s_{k_i}~ \forall i=1,\ldots,r$, and such that no two
$j$-labeled edges share the same origin or the same terminus.
Equivalently, we can generate the quotient obtained from a certain
set $E$ of pairs by making gradually all necessary merges, and only
them: start with the partition
$\{\{s_0\},\{s_1\},\ldots,\{s_{|w|}\}\}$ and gradually merge every
two blocks that contain the two elements of the same pair in $E$.
Then gradually merge every two blocks which are the origin
(terminus) of two $j$-labeled edge with the same terminus (resp.
origin). It is not hard to see that the order of merging has no
significance. To illustrate, we show in Figure
\ref{fig:generating_set_of_pairs} how we obtain the upper right
quotient from Figure \ref{fig:q_w} from the set of pairs
$\{\{s_0,s_3\},\{s_0,s_4\}\}$:

\begin{figure}[h]
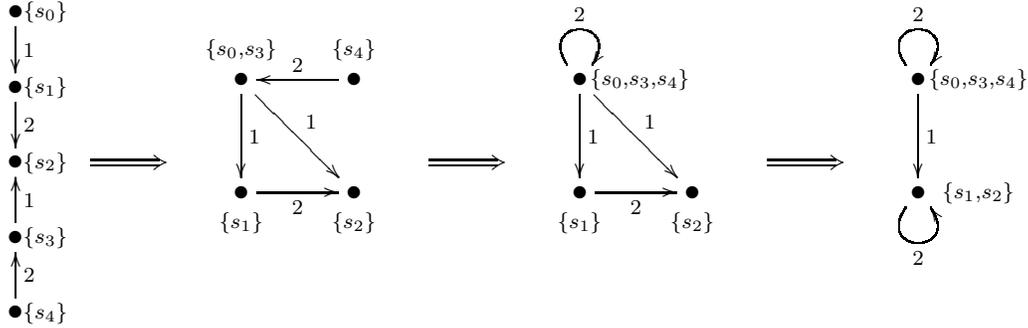

\begin{center}
\xy
(0,40)*+{\bullet}="s0"+(4,0)*{\scriptstyle \{s_0\}};
(0,30)*+{\bullet}="s1"+(4,0)*{\scriptstyle \{s_1\}};
(0,20)*+{\bullet}="s2"+(4,0)*{\scriptstyle \{s_2\}};
(0,10)*+{\bullet}="s3"+(4,0)*{\scriptstyle \{s_3\}};
(0,0)*+{\bullet}="s4"+(4,0)*{\scriptstyle \{s_4\}};
{\ar^{1} "s0";"s1"};
{\ar^{2} "s1";"s2"};
{\ar_{1} "s3";"s2"};
{\ar_{2} "s4";"s3"};
{\ar@{=>} (10,20)*{}; (20,20)*{}};
(30,31)*+{\bullet}="r0"+(0,4)*{\scriptstyle \{s_0,s_3\}};
(30,16)*+{\bullet}="r1"+(0,-4)*{\scriptstyle \{s_1\}};
(45,16)*+{\bullet}="r2"+(0,-4)*{\scriptstyle \{s_2\}};
(45,31)*+{\bullet}="r4"+(0,4)*{\scriptstyle \{s_4\}};
{\ar^{1} "r0";"r1"};
{\ar_{2} "r1";"r2"};
{\ar^{1} "r0";"r2"};
{\ar_{2} "r4";"r0"};
{\ar@{=>} (55,20)*{}; (65,20)*{}};
(75,31)*+{\bullet}="t0"+(8,0)*{\scriptstyle \{s_0,s_3,s_4\}};
(75,16)*+{\bullet}="t1"+(0,-4)*{\scriptstyle \{s_1\}};
(90,16)*+{\bullet}="t2"+(0,-4)*{\scriptstyle \{s_2\}};
{\ar^{1} "t0";"t1"};
{\ar_{2} "t1";"t2"};
{\ar^{1} "t0";"t2"};
{\ar@(ul,ur)^{2} "t0";"t0"};
{\ar@{=>} (100,20)*{}; (110,20)*{}};
%
(120,31)*+{\bullet}="u0"+(8,0)*{\scriptstyle \{s_0,s_3,s_4\}};
(120,16)*+{\bullet}="u1"+(8,0)*{\scriptstyle \{s_1,s_2\}};
{\ar^{1} "u0";"u1"};
{\ar@(dl,dr)_{2} "u1";"u1"};
{\ar@(ul,ur)^{2} "u0";"u0"};
\endxy
\end{center}
\caption{Obtaining the quotient graph of $w=g_1g_2g_1^{-1}g_2^{-1}$
which is generated by the set of pairs
$\{\{s_0,s_3\},\{s_0,s_4\}\}$: We first merge the blocks containing
$s_0$ and $s_3$ (the first generating pair), and then the blocks
containing $s_0$ and $s_4$ (the second pair). We finish by merging
the blocks $\{s_1\}$ and $\{s_2\}$ because they are the termini of
$1$-edges which share the same origin.}
\label{fig:generating_set_of_pairs}
\end{figure}

For every graph $\G$ we denote by $\chi(\G) = e_\G - v_\G + 1$ the
Euler characteristic of $\G$. It turns out (\cite{LP10}, Lemma 6)
that the smallest cardinality of a generating set of pairs of $\G
\in \Q_w$ is $\chi(\G)$. We analyze the smallest generating sets of
each $\G\in\Q_w$ and define:

\begin{definition} \label{def:type_a_b}
Let $w$ be a word in $\S^*$. We say that a quotient graph
$\G\in\Q_w$ \textbf{has type A}, if one of the smallest generating
sets of pairs for $\G$ contains the pair $\{s_0,s_{|w|}\}$.
Otherwise, we say $\G$ \textbf{has type B}.
\end{definition}

For example, out of the seven quotient graphs in Figure
\ref{fig:q_w}, only the figure-eight graph with one vertex and two
edges has type B. The other six graphs have type A, as we show in
Figure \ref{fig:generating_sets}.

\begin{figure}[h]
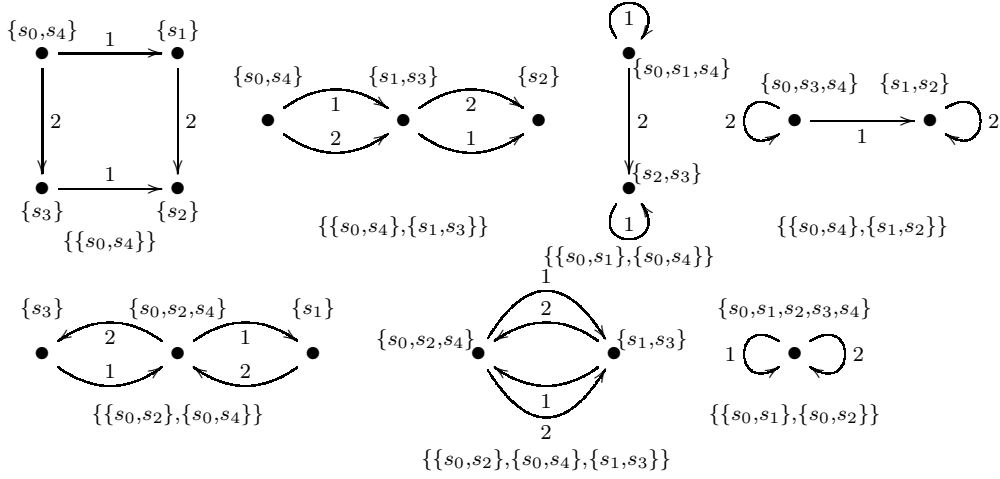

\begin{center}
\xy
(0,40)*+{\bullet}="s0"+(0,3)*{\scriptstyle \{s_0,s_4\}};
(18,40)*+{\bullet}="s1"+(0,3)*{\scriptstyle \{s_1\}};
(18,22)*+{\bullet}="s2"+(0,-3)*{\scriptstyle \{s_2\}};
(0,22)*+{\bullet}="s3"+(0,-3)*{\scriptstyle \{s_3\}};
{\ar^{1} "s0";"s1"};
{\ar^{2} "s1";"s2"};
{\ar^{1} "s3";"s2"};
{\ar^{2} "s0";"s3"};
(9,15)*{\scriptstyle \{\{s_0,s_4\}\}};
%
(30,31)*+{\bullet}="r0"+(0,6)*{\scriptstyle \{s_0,s_4\}};
(48,31)*+{\bullet}="r1"+(0,6)*{\scriptstyle \{s_1,s_3\}};
(66,31)*+{\bullet}="r2"+(0,6)*{\scriptstyle \{s_2\}};
{\ar@/^1pc/_{1} "r0";"r1"};
{\ar@/^1pc/_{2} "r1";"r2"};
{\ar@/_1pc/^{1} "r1";"r2"};
{\ar@/_1pc/^{2} "r0";"r1"};
(48,17)*{\scriptstyle \{\{s_0,s_4\},\{s_1,s_3\}\}};
%
(78,40)*+{\bullet}="t0"+(7,-2)*{\scriptstyle \{s_0,s_1,s_4\}};
(78,22)*+{\bullet}="t1"+(5,2)*{\scriptstyle \{s_2,s_3\}};
{\ar@(ul,ur)_{1} "t0";"t0"};
{\ar^{2} "t0";"t1"};
{\ar@(dl,dr)^{1} "t1";"t1"};
(78,13)*{\scriptstyle \{\{s_0,s_1\},\{s_0,s_4\}\}};
%
(100,31)*+{\bullet}="u0"+(2,5)*{\scriptstyle \{s_0,s_3,s_4\}};
(118,31)*+{\bullet}="u1"+(-2,5)*{\scriptstyle \{s_1,s_2\}};
{\ar@(ul,dl)_{2} "u0";"u0"};
{\ar_{1} "u0";"u1"};
{\ar@(ur,dr)^{2} "u1";"u1"};
(109,17)*{\scriptstyle \{\{s_0,s_4\},\{s_1,s_2\}\}};
%
(0,0)*+{\bullet}="v0"+(0,6)*{\scriptstyle \{s_3\}};
(18,0)*+{\bullet}="v1"+(0,6)*{\scriptstyle \{s_0,s_2,s_4\}};
(36,0)*+{\bullet}="v2"+(0,6)*{\scriptstyle \{s_1\}};
{\ar@/^1pc/_{1} "v1";"v2"};
{\ar@/^1pc/_{2} "v2";"v1"};
{\ar@/_1pc/^{1} "v0";"v1"};
{\ar@/_1pc/^{2} "v1";"v0"};
(18,-8)*{\scriptstyle \{\{s_0,s_2\},\{s_0,s_4\}\}};
%
(58,0)*+{\bullet}="w0"+(-7,2)*{\scriptstyle \{s_0,s_2,s_4\}};
(76,0)*+{\bullet}="w1"+(5,2)*{\scriptstyle \{s_1,s_3\}};
{\ar@/^2pc/^{1} "w0";"w1"};
{\ar@/_1pc/_{2} "w1";"w0"};
{\ar@/^1pc/^{1} "w1";"w0"};
{\ar@/_2pc/_{2} "w0";"w1"};
(67,-14)*{\scriptstyle \{\{s_0,s_2\},\{s_0,s_4\},\{s_1,s_3\}\}};
%
(100,0)*+{\bullet}="x"+(0,6)*{\scriptstyle \{s_0,s_1,s_2,s_3,s_4\}};
{\ar@(ul,dl)_{1} "x";"x"};
{\ar@(ur,dr)^{2} "x";"x"};
(100,-8)*{\scriptstyle \{\{s_0,s_1\},\{s_0,s_2\}\}};
\endxy
\end{center}
\caption{Smallest generating sets for the quotients graphs in $\Q_w$
when $w=g_1g_2g_1^{-1}g_2^{-1}\in\F_2$. Note that the size of the
generating set of each quotient graph $\G$ equals $\chi(\G)$. All
graphs except for the bottom right one have type A: they have a
smallest generating set that contains $\{s_0,s_4\}$. The remaining
figure-eight graph has type B: none of his smallest generating sets
contains $\{s_0,s_4\}$.} \label{fig:generating_sets}
\end{figure}

We now have all the ingredients required to define $\beta(w)$:
\begin{definition} \label{def:beta}
Let $w$ be a word in $\S^*$. We define \textbf{$\beta(w)$} to be the
smallest characteristic of a type-B graph in $\Q_w$. Namely,
\begin{displaymath}
\beta(w) := \min\left( \{ \chi(\Gamma)~:~ \Gamma \in {\cal
Q}_w \textrm{ and $\Gamma$ has type B} \} \cup \{\infty\} \right)
\end{displaymath}
\end{definition}

For example, for the commutator word $w=g_1g_2g_1^{-1}g_2^{-1}$ we have $\beta(w)=2$.\\

Lemma 9 of \cite{LP10} shows that $\b (\cdot)$ is invariant under
reduction of words, so that it is in fact a well defined function on
the free group $\F_k$. The preceding lemma therein shows that $\b
(\cdot)$ is also invariant under cyclic shift, and in fact it can be
shown that it is invariant under the action of $Aut(\F_k)$ on
$\F_k$.

\subsection{An Algorithm to Calculate \texorpdfstring{$\phi$}{phi}} \label{sbs:phi}
The source for the strong relations between $\b (\cdot)$ and $\f
(\cdot)$ is in the role played by the set $\Q_w$ in an algorithm to
calculate $\f(w)$ (in addition to its role in the definition of
$\b(w)$). This algorithm was initiated by \cite{Nic94} and further
analyzed in \cite{LP10}.

Fix some $w\in\S^*$. In order to calculate the probability that $1$
is a fixed point of the permutation $w(\s_1,\ldots,\s_k)\in S_n$ (as
usual, $\s_1,\ldots,\s_k$ are randomly chosen from $S_n$), we trace
the trail of $1$ through $w(\s_1,\ldots,\s_k)$. If $w =
g_{i_1}^{\alpha_1}g_{i_2}^{\alpha_2}\ldots
g_{i_{|w|}}^{\alpha_{|w|}} \in \S^*$, we first look at
$s_1=\s_{i_1}^{\alpha_1}(1)\in[n]$, then at
$s_2=\s_{i_2}^{\alpha_2}\left(\s_{i_1}^{\alpha_1}(1)\right)\in[n]$,
etc. For instance, for $w=g_1g_2g_1^{-1}g_2^{-1}$ we draw the trail
of $1$ as follows:

\[
\xy
(20,0)*+{1}="s0";
(40,0)*+{s_1}="s1";
(60,0)*+{s_2}="s2";
(80,0)*+{s_3}="s3";
(100,0)*+{s_4}="s4";
{\ar^{1} "s0";"s1"};
{\ar^{2} "s1";"s2"};
{\ar_{1} "s3";"s2"};
{\ar_{2} "s4";"s3"};
\endxy
\]

Note that for the sake of convenience, we compose permutations from
left to right. This is inconsequential for the analysis of the
variables $X_w^{(n)}$ and the function $\f(w)$ since $w(\sigma_1,\ldots,\sigma_k)$ with
left-to-right composition is the inverse of
$w(\sigma_1^{\;-1},\ldots,\sigma_k^{\;-1})$ with right-to-left
composition, and thus both have the same cycle structure.

Continuing with the example of $w=g_1g_2g_1^{-1}g_2^{-1}$, whenever
$w(\s_1,\s_2)$ fixes 1 it is obviously always the case that $s_4=1$.
We then divide this event to several disjoint sub-events according
to the ``coincidence pattern'' of $1,s_1,s_2$ and $s_3$. For
instance, we have the subevent where $s_1,s_2$ and $s_3$ consist of
three distinct numbers in $\{2,3,\ldots,n\}$. This event corresponds
to the quotient graph

\[
\xy
(0,15)*+{1}="s0";
(15,15)*+{s_1}="s1";
(15,0)*+{s_2}="s2";
(0,0)*+{s_3}="s3";
{\ar^{1} "s0";"s1"};
{\ar^{2} "s1";"s2"};
{\ar_{1} "s3";"s2"};
{\ar_{2} "s0";"s3"};
\endxy
\]
and its probability is $\frac{(n-1)(n-2)(n-3)}{n(n-1)\cdot n(n-1)}$:
The number of possible values of $s_1,s_2,s_3$ is $(n-1)(n-2)(n-3)$.
The chance that $\s_1(1)=s_1$ and $\s_1(s_3)=s_2$ is
$\frac{1}{n(n-1)}$, and likewise the probability that
$\s_2(s_1)=s_2$ and $\s_2(1)=s_3$ equals $\frac{1}{n(n-1)}$ (we
assume $n\geq2$ for both calculations). A different subevent
corresponds to the case where $s_1=s_2$ and $s_3=1$, and is depicted
by the graph

\[\xy
(0,0)*+{1}="u0";
(18,0)*+{\bullet}="u1"+(0,5)*{\scriptstyle \{s_1,s_2\}};
{\ar@(ul,dl)_{2} "u0";"u0"};
{\ar_{1} "u0";"u1"};
{\ar@(ur,dr)^{2} "u1";"u1"};
\endxy\]
By similar arguments we get that the probability of this subevent is $\frac{n-1}{n\cdot n(n-1)}$. \\

Note that not all coincidence patterns are realizable. For instance, it is impossible to have $s_3=1$ and $s_1\neq s_2$, because $s_1=\s_1(1)$ whereas $s_2=\s_1(s_3)$. These considerations and the requirement that $s_4$ equal $1$ show that the realizable coincidence patterns are exactly those corresponding to realizable quotient graphs of $w$, i.e. to graphs in the set $\Q_w$ introduced in Definition \ref{def:realizable}.

In general, for each $\G\in\Q_w$, the probability that $\G$ depicts the trail of $1$ through $w(\s_1,\ldots,\s_k)$ equals
\[
\frac{(n-1)(n-2)\ldots(n-v_\G+1)}{\prod_{j=1}^k{n(n-1)\ldots(n-e_\G^j+1)}},
\]
where $v_\G$ is the number of vertices in $\G$, $e_\G^j$ is the number of $j$-labeled edges in $\G$ and $n$ is assumed to be $\geq e_\G^j~ \forall j$. This shows that for $n\geq e_\G^j~ \forall j$, $\Phi_w(n)$ can be calculated as follows:
\begin{eqnarray} \label{eq:Phi_formula}
\Phi_w(n) &=& Prob\left(w(\s_1,\ldots,\s_k)(1)=1\right) - \frac{1}{n} \nonumber \\
&=& - \frac{1}{n} + \sum_{\G\in\Q_w}{\frac{(n-1)(n-2)\ldots(n-v_\G+1)}{\prod_{j=1}^k{n(n-1)\ldots(n-e_\G^j+1)}}}
\end{eqnarray}

Thus $\Phi_w$ is a rational function in $n$ for $n$ large enough. Note that the degree of each term in the summation in \eqref{eq:Phi_formula} is $-\chi(\G)\leq0$ (the Euler characteristic is non-negative for connected graphs), so the degree of $\Phi_w$ as a rational function in $n$ in non-positive. This shows that we can write this rational function as $\sum_{i=0}^\infty{\frac{a_i(w)}{n^i}}$, as mentioned in \eqref{eq:Phi}. $\phi(w)$ was defined to be the smallest $i$ for which $a_i(w)$ does not vanish, which is exactly the additive inverse of the degree of this rational function.

Note also that for each $i$, $a_i(w)$ is \emph{only affected by the quotients graphs $\G\in\Q_w$ with $\chi(\G)\leq i$}. Thus, when analyzing the coefficients $a_0(w),a_1(w)$ and $a_2(w)$, one has to analyze only the quotients graphs of characteristic $\leq 2$.

Finally, it is easy to see that $\f(\cdot)$ is invariant under reduction of words (by its definition) and so, like $\b(\cdot)$ is a well defined function on the free group $\F_k$. Moreover, if $\psi\in Aut(\F_k)$ then for each $w\in\F_k$, $w$ and $\psi(w)$, acting as word maps, induce the same distribution on $S_n$. So like $\b(\cdot)$ again, $\f(\cdot)$ in invariant under the action of $Aut(\F_k)$ on $\F_k$.

\subsection{Already Established Relations Between \texorpdfstring{$\phi$}{phi} and \texorpdfstring{$\beta$}{beta}} \label{sbs:know_relations}
Before moving forward to proving that $\phi(w)=2 \iff \beta(w)=2$, we would like to give a short summary of the connections between the two functions that were already established in \cite{LP10}.

To begin with, $\f(w)=i \iff \b(w)=i$ for $i=0,1$ (Lemmas 12 and 13 in \cite{LP10}). The case $i=0$ is a bit degenerate: it occurs only when there is a quotient graph of Euler characteristic $0$ in $\Q_w$. It means that with an \emph{empty} generating set of pairs we obtain a quotient graph where $s_0\equiv s_{|w|}$. This only happens when $w$ reduces to the empty word, i.e. $w=1$ as an element of $\F_k$.

The case $i=1$ is more interesting. We can assume that $w\neq 1$ (in $\F_k$), and so there are no quotient graphs of characteristic $0$. Since every quotient graph of characteristic $1$ contributes exactly $1$ to $a_1(w)$ and we subtract $\frac{1}{n}$ from the sum in \eqref{eq:Phi_formula}, this coefficient equals the number of $\G\in\Q_w$ with $\chi(\G)=1$, minus 1. Since there is exactly one type-A graph in $\Q_w$ with characteristic $1$ (the one generated by $\{\{s_0,s_{|w|}\}\}$), we get that $a_1(w)$ equals the number of type-B quotient graphs of characteristic $1$, so that indeed $\phi(w)=1 \iff \b(w)=1$. In fact, as explained in \cite{LP10}, this case corresponds exactly to the case where $w$ is a power of another word (i.e. $w=u^d$ for some $u\in\F_k$ and $d\geq 2$).

Another interesting connection between the two functions occurs when $w$ is the single letter word $w=g_1$. This word induces the uniform distribution on $S_n$ (and in fact on any finite group) when applied as a word map from $S_n^{\;k}$ to $S_n$, and so $\f(w)=\infty$. But $\Q_w$ contains only one graph, and it is easy to verify that we also have $\b(w)=\infty$. Since both functions are invariant under the action of $Aut(\F_k)$ on $\F_k$, we get that they both agree and equal $\infty$ on the entire orbit of the single letter word in $\F_k$. (This orbit includes, for example, the word $w=g_1g_2g_1g_2g_1$.)

\section{Proof of the Main Theorem} \label{sec:proof}
In this section we aim to prove our main theorem, namely that for every $w\in\F_k$, $\f(w)=2 \iff \b(w)=2$. We follow here the same path of partial proof set by \cite{LP10}, and complete it to obtain Theorem \ref{ther:phi=beta=2}.

Fix some $w=g_{i_1}^{\alpha_1}g_{i_2}^{\alpha_2}\ldots g_{i_{|w|}}^{\alpha_{|w|}} \in\S^*$ (where $i_1, i_2, \ldots, i_{|w|} \in \{1, 2, \ldots, k\}$ and $\alpha_1, \alpha_2, \ldots, \alpha_{|w|} \in \{-1, 1\}$). Along the proof we have several simplifying assumptions on $w$. As we already know that $\f(w)=i \iff \b(w)=i$ for $i=0,1$, we assume that $\b(w),\f(w)\geq2$. (Equivalently, we assume that as an element in $\F_k$, $w\neq1$, nor is it a power of another word.) In addition, it is explicitly shown in \cite{LP10} that $\f(\cdot)$ and $\b(\cdot)$ are invariant under cyclic reduction, so we can also assume that $w$ is cyclically reduced, i.e. that $g_{i_j}^{\alpha_j}g_{i_{j+1}}^{\alpha_{j+1}}\neq1$ ($j=1,2,\ldots,|w|-1$), as well as that $g_{i_{|w|}}^{\alpha_{|w|}}g_{i_1}^{\alpha_1}\neq1$.\\

We continue by analyzing the coefficient $a_2(w)$ in the series in \eqref{eq:Phi}. Because $\f(w)\geq2$ we know that $a_0(w)=a_1(w)=0$. By the review in Section \ref{sec:review}, we also know that there are no quotient graphs $\G\in\Q_w$ with $\chi(\G)=0$ and that there is exactly one quotient graph with $\chi(\G)=1$ (the graph obtained by merging $s_0$ with $s_{|w|}$).

\begin{definition} \label{def:universal_graph}
Denote by $\Gtilde_w$ the quotient graph in $\Q_w$ generated by the set of pairs $\{\{s_0,s_{|w|}\}\}$. We call this graph \textbf{the universal graph} of $w$.
\end{definition}

Under our assumptions, $\chi(\Gtilde_w)=1$ and it has type-A. Because $w$ is cyclically reduced, $\Gtilde_w$ is a simple-circle graph, with exactly $|w|$ vertices and $|w|$ edges. For instance, the universal graph of the commutator word $w=g_1g_2g_1^{-1}g_2^{-1}$ is the upper left one in Figure \ref{fig:generating_sets}.

As we mentioned above, $a_2(w)$ is affected only by quotient graphs in $\Q_w$ with characteristic $\leq 2$. Moreover, each $\G\in\Q_w$ with $\chi(\G)=2$ contributes exactly $1$ to $a_2(w)$. Thus, under our assumptions, $a_2(w)$ consists of the contribution of $\Gtilde_w$ to it, plus the number of $\G\in\Q_w$ with $\chi(\G)=2$. We can therefore reduce to the following lemma which yields both Theorem \ref{ther:phi=beta=2} and Corollary \ref{cor:E>1-1/n^2}.

\begin{lemma} \label{lem:balancing}
Let $w\in \F_k$ have $\f(w)\geq2$ ($\iff \b(w)\geq2$). Then the contribution of $\Gtilde_w$ to $a_2(w)$ exactly balances off the contribution of all type-A quotient graphs of characteristic $2$. Put differently,
\[
a_2(w) = \left|\left\{\G\in\Q_w~:~\chi(\G)=2 \textrm{ and } \G \textrm{ has type-B}\right\}\right|
\]
\end{lemma}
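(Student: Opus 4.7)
The plan is to reduce Lemma~\ref{lem:balancing} to an exact count of type-A characteristic-$2$ quotients in $\Q_w$. Under the standing assumptions that $\f(w)\geq 2$ and that $w$ is nontrivial and cyclically reduced, the only $\G\in\Q_w$ with $\chi(\G)\leq 1$ is $\Gtilde_w$, and each $\G\in\Q_w$ with $\chi(\G)=2$ contributes exactly $+1$ to $a_2(w)$ at leading order in $1/n$. Writing $c(\Gtilde_w)$ for the $1/n^2$ coefficient of $\Gtilde_w$'s summand of \eqref{eq:Phi_formula} (after cancellation with the $-1/n$ normalization), we obtain
\[
a_2(w) \;=\; c(\Gtilde_w) \;+\; \bigl|\{\G\in\Q_w:\chi(\G)=2\}\bigr|,
\]
so the lemma is equivalent to $-c(\Gtilde_w)=\bigl|\{\G\in\Q_w:\chi(\G)=2 \text{ and } \G \text{ has type-A}\}\bigr|$. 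Computing $c(\Gtilde_w)$ is direct: since $w$ is cyclically reduced, $\Gtilde_w$ is a cycle on $|w|$ vertices with $m_j$ edges of label $j$, where $m_j$ denotes the number of occurrences of $g_j^{\pm 1}$ in $w$. Expanding
\[
\frac{(n-1)(n-2)\cdots(n-|w|+1)}{\prod_{j=1}^k n(n-1)\cdots(n-m_j+1)}
\]
as a Laurent series in $1/n$ and stripping off the leading $1/n$ yields
\[
c(\Gtilde_w) \;=\; \sum_j\binom{m_j}{2}-\binom{|w|}{2} \;=\; -\sum_{j<j'}m_jm_{j'}.
\]
Thus the lemma reduces to showing there are exactly $\sum_{j<j'}m_jm_{j'}$ type-A characteristic-$2$ quotients in $\Q_w$.

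For this count I would invoke the graph $\U$ from \cite{LP10}, whose components index the type-A characteristic-$2$ quotients via the natural surjection $f$ (to be recalled in Subsection~\ref{sbs:injectivity}). Its vertices correspond to candidate ``extra'' generating pairs $\{s_a,s_b\}$ to be adjoined to $\{s_0,s_{|w|}\}$, and its edges encode the forcings of conditions~(2)--(3) in Definition~\ref{def:realizable}, so that two pairs in the same component automatically generate the same quotient of $\Gtilde_w$. Tracing through the construction, the total number of components of $\U$ comes out to $\sum_{j<j'}m_jm_{j'}$ (morally one per unordered pair of positions of $w$ carrying distinct labels), and the lemma follows once $f$ is shown to be injective.

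Injectivity of $f$ is the main obstacle, and is where the recursive factorization of $w$ promised in the introduction enters. Suppose for contradiction that two distinct components of $\U$ map to the same quotient $\G$ under $f$; the two witnessing pairs $\{s_a,s_b\}$ and $\{s_c,s_d\}$ then force non-equivalent identifications that nevertheless yield the same quotient of $\Gtilde_w$. I would try to exploit this collision by locating a cut vertex or a bridge in $\G$---equivalently, a position at which the cycle $\Gtilde_w$ can be syntactically sliced---producing a strictly shorter sub-word $w'$ of $w$ together with an induced subgraph $\U'$ of $\U$ inheriting the defect. An inductive hypothesis on $|w'|$ would then yield a contradiction. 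The delicate point is that forcing relations on $\U$ propagate globally through $w$, so the factorization must be chosen so that forcing orbits descend faithfully to $\U'$ and the two offending components remain distinct after reduction. Managing this interaction while recursively chopping $w$ into finer and finer pieces is where I expect the bulk of the technical work to lie.
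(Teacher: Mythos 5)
Your reduction matches the paper's and the algebra checks out: after noting that under $\f(w)\geq2$ the only quotient of characteristic $\leq 1$ is $\Gtilde_w$, and expanding its summand in \eqref{eq:Phi_formula}, the lemma becomes $|\Q_{w,2,A}|=\binom{v_{\Gtilde_w}}{2}-\sum_j\binom{e^j_{\Gtilde_w}}{2}$, and you invoke the same graph $\U$ and surjection $f$. One point you gloss over: concluding $|\Co(\U)|$ equals vertices-minus-edges requires knowing $\U$ is a forest. That is not automatic; the paper cites Lemma~14 of \cite{LP10} (a cycle in $\U$ would force $w$ to be a proper power, contradicting $\f(w)\geq2$), and your parenthetical ``morally one per unordered pair of positions'' does not supply it.

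The injectivity step is where the proposal breaks down, and it is also where you depart from the paper. You suggest, given a collision $f(C)=f(C')=\G$ with $C\neq C'$, to find a cut vertex or bridge in $\G$ and inductively slice $w$ into a shorter word $w'$. But the quotients in $\Q_{w,2,A}$ come in exactly three shapes — Figure-Eight, Theta, and Barbell (Figure~\ref{fig:f(C)}) — and a Theta graph is $2$-connected and bridgeless, so your slicing step has nothing to grab onto in that case. Even in the Figure-Eight case it is unclear why a subword $w'$ should yield an ``induced subgraph $\U'$'' carrying the same collision, since $\U$ is built from all unordered pairs of vertices of the full $\Gtilde_w$, nor why $w'$ would still satisfy $\f(w')\geq2$ so the induction hypothesis applies. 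The paper's actual argument is constructive rather than by contradiction: it associates to each $C\in\Co(\U)$ a maximal cyclic repetition in $w$ (isolated, coherent, or non-coherent — Definition~\ref{def:coherent}), shows via Lemma~\ref{lem:non_coherent} that non-coherent ones never overlap, and for the overlapping coherent case builds a chain $w^{(0)}=a_0b_0,\ w^{(1)}=a_1^{\,2}b_1,\ldots,w^{(N)}$ by repeated prefix-peeling between $a_n$ and $b_n$. The invariants of Lemma~\ref{lem:properties} make the shape of $f(C)$ determine $(a_N,b_N)$ and make each peeling step uniquely reversible, so $C$ can be read off $f(C)$ — that is the inverse $h$. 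The ``recursive factorization'' you anticipated is this prefix-peeling chain, not a cut-vertex decomposition of $\G$; as written, your induction would need a genuinely different mechanism (and a treatment of the Theta case) to close the gap.
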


Recall that the contribution of $\Gtilde_w$ to the summation in \eqref{eq:Phi_formula} is
\[
\frac{(n-1)(n-2)\ldots(n-v_{\Gtilde_w}+1)}{\prod_{j=1}^k{n(n-1)\ldots(n-e_{\Gtilde_w}^j+1)}} \]
and since $v_{\Gtilde_w}=e_{\Gtilde_w}$, a simple analysis shows that if we expand this to a power series in $\frac{1}{n}$, we obtain
\[
    \frac{1}{n} - \frac{\binom{v_{\Gtilde_w}}{2} - \sum_{j=1}^k{\binom{e_{\Gtilde_w}^j}{2}}}{n^2} + O\left(\frac{1}{n^3}\right)
\]
Our goal therefore reduces to showing that there are exactly $\binom{v_{\Gtilde_w}}{2} - \sum_{j=1}^k{\binom{e_{\Gtilde_w}^j}{2}}$ quotient graphs $\G\in\Q_w$ with $\chi(\G)=2$ and of type-A. We denote this subset of quotient graphs in $\Q_w$ by $\Q_{w,2,A}$:
\begin{definition} \label{def:Q_w2A}
Denote by $\Q_{w,2,A}$ the following subset of $\Q_w$:
\[
    \Q_{w,2,A} := \left\{ \G\in\Q_w~:~\chi(\G)=2~\wedge~type(\G)=A\right\}
\]
\end{definition}

And Lemma \ref{lem:balancing} then reduces to showing that under our simplifying assumptions,
\begin{equation} \label{eq:size_of_Qw2a}
\left|\Q_{w,2,A}\right| = \binom{v_{\Gtilde_w}}{2} - \sum_{j=1}^k{\binom{e_{\Gtilde_w}^j}{2}}
\end{equation}

Recall that, by definition, every $\G\in\Q_{w,2,A}$ can be generated by a set of two pairs, one of which is $\{s_0,s_{|w|}\}$. This is equivalent to saying that every such quotient graph is obtained from $\Gtilde_w$ by merging a single pair. (Indeed, it is an easy observation that the order in which we merge the pairs of a generating set has no significance, and we can obtain our quotient graph gradually, going through quotient graphs of smaller characteristic). In fact, we can (and will) view every $\G\in\Q_{w,2,A}$ as a realizable partition of the \emph{vertices of $\Gtilde_w$}, rather than of $\{s_0,\ldots,s_{|w|}\}$. (By realizable partition we mean here simply that in the resulting quotient graph no two $j$-edges share the same origin or the same terminus.)

This observation gives a first idea as to why the size of $\Q_{w,2,A}$ is indeed $\binom{v_{\Gtilde_w}}{2} - \sum_{j=1}^k{\binom{e_{\Gtilde_w}^j}{2}}$: The total number of pairs of vertices in $\Gtilde_w$ is $\binom{v_{\Gtilde_w}}{2}$, but clearly different pairs may generate the same $\G$. In particular, for any two $j$-edges in $\Gtilde_w$, the pair of origins generates the same quotient as the pair of termini. In the rest of the proof we will show that roughly, this is the only reason we get identical quotients.\\

To understand the full picture, we follow \cite{LP10} and introduce $\U$, ``the graph of pairs of vertices'' of $\Gtilde_w$. The graph $\U$ has $\binom{v_{\Gtilde_w}}{2}$ vertices labeled by pairs of vertices of $\Gtilde_w$, and has $\sum_{j=1}^k{\binom{e_{\Gtilde_w}^j}{2}}$ edges, one for each pair of same-color edges in $\Gtilde_w$. The edge corresponding to the pair $\{\epsilon_1,\epsilon_2\}$ of $j$-edges, is a
$j$-edge from the vertex $\{origin(\epsilon_1) ,origin(\epsilon_2)\}$
to $\{terminus(\epsilon_1) ,terminus(\epsilon_2)\}$. For example, when $w$ is the commutator word, $\U$ consists of $\binom{4}{2}=6$ vertices and $\binom{2}{2}+\binom{2}{2}=2$ edges. We illustrate a slightly more interesting case in Figure \ref{fig:Upsilon}.

\begin{figure}[h]
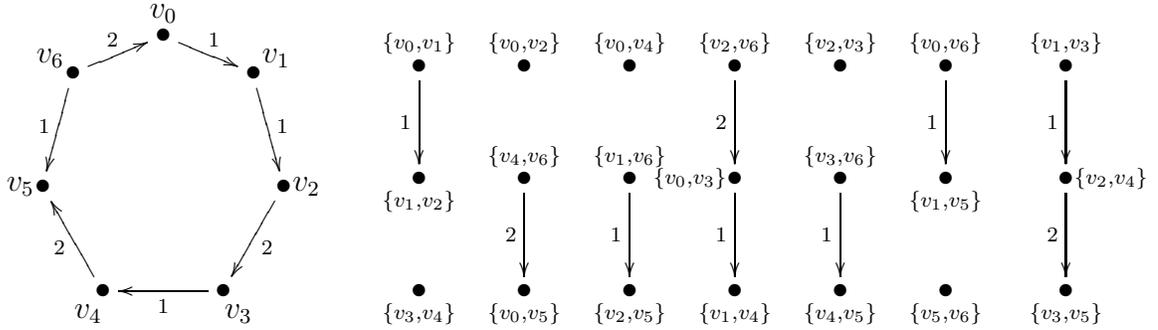

\begin{center}
\xy
(16,34)*+{\bullet}="v0"+(0,3)*{v_0};
(28,29)*+{\bullet}="v1"+(3,2)*{v_1};
(32,14)*+{\bullet}="v2"+(3,0)*{v_2};
(24,0 )*+{\bullet}="v3"+(2,-3)*{v_3};
(8 ,0 )*+{\bullet}="v4"+(-2,-3)*{v_4};
(0 ,14)*+{\bullet}="v5"+(-3,0)*{v_5};
(4 ,29)*+{\bullet}="v6"+(-3,2)*{v_6};
{\ar^{1} "v0";"v1"};
{\ar^{1} "v1";"v2"};
{\ar^{2} "v2";"v3"};
{\ar^{1} "v3";"v4"};
{\ar^{2} "v4";"v5"};
{\ar_{1} "v6";"v5"};
{\ar^{2} "v6";"v0"};
%
(50,30)*+{\bullet}="t0"+(0,3)*{\scriptstyle \{v_0,v_1\}};
(50,15)*+{\bullet}="t1"+(0,-3)*{\scriptstyle \{v_1,v_2\}};
(50,0 )*+{\bullet}="t2"+(0,-3)*{\scriptstyle \{v_3,v_4\}};
(64,30)*+{\bullet}="t3"+(0,3)*{\scriptstyle \{v_0,v_2\}};
(64,15)*+{\bullet}="t4"+(0,3)*{\scriptstyle \{v_4,v_6\}};
(64,0 )*+{\bullet}="t5"+(0,-3)*{\scriptstyle \{v_0,v_5\}};
(78,30)*+{\bullet}="t6"+(0,3)*{\scriptstyle \{v_0,v_4\}};
(78,15)*+{\bullet}="t7"+(0,3)*{\scriptstyle \{v_1,v_6\}};
(78,0 )*+{\bullet}="t8"+(0,-3)*{\scriptstyle \{v_2,v_5\}};
(92,30)*+{\bullet}="t9"+(0,3)*{\scriptstyle \{v_2,v_6\}};
(92,15)*+{\bullet}="t10"+(-6,0)*{\scriptstyle \{v_0,v_3\}};
(92,0 )*+{\bullet}="t11"+(0,-3)*{\scriptstyle \{v_1,v_4\}};
(106,30)*+{\bullet}="t12"+(0,3)*{\scriptstyle \{v_2,v_3\}};
(106,15)*+{\bullet}="t13"+(0,3)*{\scriptstyle \{v_3,v_6\}};
(106,0 )*+{\bullet}="t14"+(0,-3)*{\scriptstyle \{v_4,v_5\}};
(120,30)*+{\bullet}="t15"+(0,3)*{\scriptstyle \{v_0,v_6\}};
(120,15)*+{\bullet}="t16"+(0,-3)*{\scriptstyle \{v_1,v_5\}};
(120,0 )*+{\bullet}="t17"+(0,-3)*{\scriptstyle \{v_5,v_6\}};
(136,30)*+{\bullet}="t18"+(0,3)*{\scriptstyle \{v_1,v_3\}};
(136,15)*+{\bullet}="t19"+(6,0)*{\scriptstyle \{v_2,v_4\}};
(136,0 )*+{\bullet}="t20"+(0,-3)*{\scriptstyle \{v_3,v_5\}};
{\ar_{1} "t0";"t1"};
{\ar_{2} "t4";"t5"};
{\ar_{1} "t7";"t8"};
{\ar_{2} "t9";"t10"};
{\ar_{1} "t10";"t11"};
{\ar_{1} "t13";"t14"};
{\ar_{1} "t15";"t16"};
{\ar_{1} "t18";"t19"};
{\ar_{2} "t19";"t20"};
\endxy
\end{center}
\caption{The graph $\U$ (on the right) corresponding to the universal graph $\Gtilde_w$ (on the left) for $w=g_1^2 g_2 g_1 g_2 g_1^{\;-1} g_2$. ($\Gtilde_w$'s vertices are denoted here by $v_1,\ldots,v_7$ while the $s_i$ labels are omitted.) }  \label{fig:Upsilon}
\end{figure}

Let $\{v_i,v_j\}$ and $\{v_k,v_l\}$ be two vertices of $\U$ (so $v_i,v_j,v_k,v_l$ are vertices of $\Gtilde_w$). Then clearly, if $\{v_i,v_j\}$ and $\{v_k,v_l\}$ belong to the same connected component of $\U$, the two sets of pairs $\{\{v_i,v_j\}\}$ and $\{\{v_k,v_l\}\}$ generate \emph{the same} quotient graph in $\Q_{w,2,A}$. Thus, if we denote by \emph{$\Co(\U)$} the set of connected components of $\U$, we can define a function
\[
f~:~\Co(\U) \to \Q_{w,2,A}
\]
that sends the component $C$ to the quotient graph in $\Q_{w,2,A}$ generated from $\Gtilde_w$ by each of the vertices of $C$. The discussion above about $\Q_{w,2,A}$ shows that $f$ is surjective. \\

Now assume there is a simple cycle in $\U$ containing the vertex $\{v_1,v_2\}$. This implies that there are identical paths in $\Gtilde_w$ from $v_1$ to $v_2$ and from $v_2$ to $v_1$ (or from $v_1$ to itself and from $v_2$ to itself). This, in turn, implies there is some periodicity in $\Gtilde_w$ which is impossible as $w$ is not a power of another word. Lemma 14 in \cite{LP10} contains a detailed proof to that when $\f(w)\geq2$, $\U$ indeed contains no cycles.

Thus, $\U$ consists of exactly $\binom{v_{\Gtilde_w}}{2} - \sum_{j=1}^k{\binom{e_{\Gtilde_w}^j}{2}}$ connected components, and since $f$ is surjective, we obtain
\[ \left|\Q_{w,2,A}\right| \le |\Co(\U)| = \binom{v_{\Gtilde_w}}{2} - \sum_{j=1}^k{\binom{e_{\Gtilde_w}^j}{2}} \] \\

\subsection{The Injectivity of the Function \texorpdfstring{$f$}{f}}
\label{sbs:injectivity}

The partial proof up to this point appeared in \cite{LP10}. The missing ingredient that we complete in the rest of Section \ref{sec:proof}, is that $f$ is actually also injective. Put differently, we need to show that for any quotient graph $\G\in\Q_{w,2,A}$, there is a unique connected component $C$ of $\U$ that generates it. We do this by constructing the inverse function, $f^{-1}$.

We say that a connected component $C\in\Co(\U)$ is \emph{realized} by a quotient graph $\G\in\Q_{w,2,A}$ if for a vertex $\{v_i,v_j\}$ of $C$ we have $v_i \equiv v_j$ in $\G$. (Note that if one vertex of $C$ has this property, then so do all vertices of $C$.)

Constructing $f^{-1}$ could have been easy if for every
$\G\in\Q_{w,2,A}$ there was a single component realized by it.
However, this is not the case. Observe, for instance, $C$, the right-most component of $\U$ in Figure \ref{fig:Upsilon}. In the quotient graph $f(C)$ we have $v_1 \equiv v_3$ and also $v_3 \equiv v_5$. By transitivity, we also have $v_1 \equiv v_5$. In this case, therefore, the component $C$ ``implies'' the component containing $\{v_1,v_5\}$, and both of them are realized by the quotient graph $f(C)$ (in particular, we also have $v_0 \equiv v_6$ in $f(C)$). In general, we define:
\begin{definition} \label{def:implications}
For $C,C`\in\Co(\U)$, we say that the component $C$ \textbf{implies} the component $C`$ (or that there is an \textbf{implication} from $C$ to $C`$) whenever $C`$ is realized by the quotient graph $f(C)$.
\end{definition}
This kind of implications (all due to transitivity of the relation $\equiv$) between different components make the challenge of constructing $f^{-1}$ more delicate.

Thus we need some more machinery in order to construct the inverse function of $f$. We first show (Section \ref{sbsbs:repetitions}) the connection between components of $\U$ and cyclic repetitions in $w$, and then (Section \ref{sbsbs:factorization}) develop a recursive factoring of words which enables us to describe the quotient graph $f(C)$. With these two, we will be able to construct the inverse function in Section \ref{sbsbs:inverse_f}.

\subsubsection{Cyclic Repetitions in \texorpdfstring{$w$}{w}} \label{sbsbs:repetitions}
There is a clear connection between components of $\U$ and cyclic repetitions in $w$. Since the degree of every vertex in $\Gtilde_w$ is two, the degree of every vertex in $\U$ is $\leq 2$. So every connected component of $\U$ is either an isolated vertex or a simple path (recall that $\U$ contains no cycles). Each such path corresponds to some maximal cyclic repetition in $w$. For instance, the middle component of $\U$ in Figure \ref{fig:Upsilon} consists of a path from $\{v_2,v_6\}$ to $\{v_1,v_4\}$. This path corresponds to a cyclic repetition of the subword $g_2g_1$ in $w=g_1^2 g_2 g_1 g_2 g_1^{\;-1} g_2$. This subword appears in the third and fourth letters, as well as in the last and first letters. It is maximal because the preceding letters (the second and sixth), as well as the following ones (the fifth and second), are different from each other. We distinguish between two types of repetitions:
\begin{definition} \label{def:coherent}
Two different appearances of a word $u$ in the word $w$, cyclically, are called a \textbf{cyclic repetition}. A cyclic repetition is called \textbf{non-coherent} if $u$ appears (cyclically) in $w$ once as $u$ and once as $u^{-1}$. Otherwise, the repetition is called \textbf{coherent}. We also describe components of $\U$ by their corresponding repetitions: isolated, coherent or non-coherent.
\end{definition}

For example, the component $C_1$ containing $\{v_2,v_6\}$ in $\U$ in Figure \ref{fig:Upsilon} is coherent as it corresponds to a maximal coherent repetition (of length $2$), whereas the component $C_2$ of $\{v_0,v_6\}$ is non-coherent: it corresponds to a maximal non-coherent repetition (of length $1$).\\

In what follows the notion of a ``repetition'' will be used to denote a maximal cyclic repetition, unless otherwise stated. We say that a repetition (or the corresponding component) \emph{is overlapping} if the two appearances of the subword $u$ meet, even if only in the endpoints (e.g., if $u$ appears in $w$ from the $i$-th letter to the $j$-th letter, and then from letter $j+1$ to letter $k$, we say that the repetition is overlapping). This is equivalent to saying that the vertices of the component are \emph{not} all disjoint (as sets of pairs).

In fact, the transitivity of $\equiv$ plays a role in the generation of $f(C)$, if and only if the the vertices of $C$ are not all disjoint. Therefore, the component $C$ implies other components if and only if it is overlapping (for this simple observation we use the fact that $C$ is not a cycle). It turns out that this is never the case when the repetition is non-coherent.

\begin{lemma} \label{lem:non_coherent}
A non-coherent repetition is never overlapping. In other words, a non-coherent component $C\in\Co(\U)$ does not imply any other components.
\end{lemma}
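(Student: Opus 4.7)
The plan is to assume that some non-coherent repetition overlaps, and from that extract two consecutive positions of $w$ whose letters are mutual inverses, contradicting the standing assumption that $w$ is cyclically reduced. The second assertion of the lemma is then immediate from the observation made just before it: a component $C$ can imply another component only if $C$ is overlapping.

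Setup: a non-coherent component $C\in\Co(\U)$ corresponds to two equal-length paths $P_1, P_2$ in the simple cycle $\Gtilde_w$ that both spell the same word $u$ but traverse the cycle in opposite directions. After a cyclic rotation and possibly swapping the roles of $P_1$ and $P_2$, I may take $P_1$ to traverse the edges $e_p, e_{p+1},\ldots, e_{p+\ell-1}$ forward and $P_2$ to traverse $e_q, e_{q-1},\ldots, e_{q-\ell+1}$ backward, with $p\leq q$ and neither arc wrapping around (the wrap-around case is handled analogously after cyclic relabeling). Equating the two letter-by-letter readings of $u$ yields the reflection identities
\[
 i_m \;=\; i_{R-m}, \qquad \alpha_m \;=\; -\alpha_{R-m} \qquad (m\in[p,\,p+\ell-1]),
\]
where $R = p+q$ and $R-m$ ranges over $P_2$'s edge indices $[q-\ell+1,\,q]$.

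Overlap of $C$ now falls into two cases: (a) the two edge arcs share at least one edge ($q\leq p+2\ell-2$), or (b) they are edge-disjoint but share one vertex ($q = p+2\ell-1$). In case (a), a direct computation splitting on whether $q\leq p+\ell-1$ identifies the overlap as an interval $[A,B]$ with $A+B = R$, so $m\mapsto R-m$ is an involution of $[A,B]$. If $R$ is even, the fixed point $R/2$ lies in $[A,B]$ and forces $\alpha_{R/2} = -\alpha_{R/2}$, which is absurd. If $R$ is odd, then $A<B$ (else $R = 2A$ would be even), and the involution swaps the consecutive integers $(R-1)/2$ and $(R+1)/2$, both of which lie in $[A,B]$. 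In case (b), the involution already pairs $m^* = p+\ell-1$ with $m^*+1 = p+\ell$. In every scenario we obtain two consecutive positions $m^*,\,m^*+1$ of $w$ with $i_{m^*} = i_{m^*+1}$ and $\alpha_{m^*} = -\alpha_{m^*+1}$, i.e., a backtrack, contradicting cyclic reducedness.

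The main technical step is the verification of $A+B = R$ in case (a), which pins down the overlap as an interval symmetric about the reflection axis $R/2$; once this is in place, the parity dichotomy of ``fixed point'' versus ``adjacent swap'' automatically produces the forbidden backtrack in $w$, so the apparently delicate overlap hypothesis collapses to an elementary contradiction with cyclic reducedness.
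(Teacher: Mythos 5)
Your proposal is correct and follows essentially the same route as the paper: both identify the reflection involution (your $m\mapsto R-m$ is the paper's $k\mapsto j+|u|-k$ after normalizing the first appearance of $u$ to be a prefix of $w$), and both derive a contradiction via the parity dichotomy — an integer fixed point forces $\alpha = -\alpha$, while the odd case produces a backtrack at two adjacent positions, contradicting cyclic reducedness. The paper compresses your case analysis by only splitting on the parity of $j+|u|$ (your $R$); your case (b), where the two arcs merely share a vertex, corresponds to $j=|u|+1$ and is absorbed into the paper's odd-parity branch. The extra step you single out — verifying $A+B=R$ so the overlap interval is symmetric about $R/2$ — is implicit in the paper's bound $k\le|u|$ but you are right that it is the point that makes the midpoint argument go through.
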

\begin{proof}
Recall that we wrote $w = g_{i_1}^{\alpha_1} g_{i_2}^{\alpha_2} \ldots g_{i_{|w|}}^{\alpha_{|w|}} \in \S^*$ ($i_1, i_2, \ldots, i_{|w|} \in \{1, 2, \ldots, k\}$, $\alpha_1, \alpha_2, \ldots, \alpha_{|w|} \in \{-1, 1\}$). Assume that there is a non-coherent cyclic repetition in $w$ consisting of a subword $u$, such that the two appearances of $u$ overlap. Up to a cyclic shift of $w$ we can assume that $u$ is a prefix of $w$, so that $u = g_{i_1}^{\alpha_1} g_{i_2}^{\alpha_2}\ldots g_{i_{|u|}}^{\alpha_{|u|}}$, and that there is some $1\leq j \leq |u|+1$ such that $u^{-1} = g_{i_j}^{\alpha_j} g_{i_{j+1}}^{\alpha_{j+1}} \ldots g_{i_{j+|u|-1}}^{\alpha_{j+|u|-1}}$ (the summation of indices is modulo $|w|$). In other words, $g_{i_k}^{\alpha_k}=g_{i_{j+|u|-k}}^{-\alpha_{j+|u|-k}}$ for $k=1,\ldots,|u|$. Now, if $j$ and $|u|$ have the same parity, let $k=\frac{j+|u|}{2}\leq |u|$, and we get that $g_{i_k}^{\alpha_k}=g_{i_k}^{-\alpha_k}$, which is impossible. On the other hand, if $j$ and $|u|$ have different parity, let $k=\frac{j+|u|-1}{2}\leq |u|$, and then $g_{i_k}^{\alpha_k} = g_{i_{k+1}}^{-\alpha_{k+1}}$ which is impossible because $w$ is cyclically reduced.\\
\end{proof}


\subsubsection{Recursive Factorization of a Word and the Quotient Graph \texorpdfstring{$f(C)$}{f(C)}} \label{sbsbs:factorization}
From Lemma \ref{lem:non_coherent}, we deduce that a component of $\U$ implies other components if and only if it corresponds to a coherent overlapping repetition. We continue by investigating the quotient graphs generated by any $C\in\Co(\U)$, including those components that imply others. The main tool we use is that of recursive factorization of words.

Denote by $v_0,v_1,\ldots,v_{|w|-1}$ the vertices of $\Gtilde_w$ ($v_0$ is the block $\{s_0,s_{|w|}\}$ and every other $v_i$ is the block $\{s_i\}$). For every $v_i,v_j$ we let $|v_i\to v_j|$ denote the length of the path in $\Gtilde_w$ from $v_i$ to $v_j$ which goes in the direction that $w$ traces $\Gtilde_w$ (so $|v_i\to v_j|$ equals $(j-i)~mod~|w| \in \{0,1,\ldots,|w|-1\}$).

Now let $C$ be some component in $\Co(\U)$.  We pick two vertices $v_x,v_y$ of $\Gtilde_w$ such that $\{v_x,v_y\}$ is a vertex of $C$, in the following way:
\begin{itemize}
  \item If $C$ is an isolated vertex, we let $\{v_x,v_y\}$ be this vertex (with arbitrary order of $v_x$ and $v_y$).
  \item If $C$ corresponds to a coherent repetition of the subword $u$, let $v_x$ and $v_y$ be the vertices of $\Gtilde_w$ where the two appearances of $u$ begin ($\{v_x,v_y\}$ is an endpoint in $C$), so that $|v_x \to v_y| \le |v_y \to v_x|$. We then have
    \[u = g_{i_{x+1}}^{\alpha_{x+1}} \ldots g_{i_{x+|u|}}^{\alpha_{x+|u|}} = g_{i_{y+1}}^{\alpha_{y+1}}\ldots g_{i_{y+|u|}}^{\alpha_{y+|u|}}
    \]
    (the summation of indices is modulo $|w|$).
  \item Finally, if $C$ corresponds to a non-coherent repetition of the word $u$, Lemma \ref{lem:non_coherent} shows that the two appearances of $u$ do not overlap, so we let $v_x$ denote the end of one of them (the end with respect to the direction of $w$), and $v_y$ be the beginning of the other. For instance, if $C$ is the component of $\{v_0,v_6\}$ in Figure \ref{fig:Upsilon}, we have either $v_x=v_6,~v_y=v_0$, or $v_x=v_1,~v_y=v_5$ (but not, e.g., $v_x=v_0,v_y=v_6$).
\end{itemize}

Let $a_0$ denote the subword $g_{i_{x+1}}^{\alpha_{x+1}}\ldots g_{i_y}^{\alpha_y}$, and let $b_0$ be the (cyclic) remaining of $w$, i.e. $b_0 := g_{i_{y+1}}^{\alpha_{y+1}} \ldots g_{i_x}^{\alpha_x}$. If we let $w_x$ denote the cyclic shift of $w$ by $x$ steps to the left, we can write $w_x$ as an expression in the subwords $a_0,b_0$, which we denote $w^{(0)}$, namely:
\[
w^{(0)} := a_0 b_0 = w_x
\]

If $C$ is isolated or coherent but not overlapping, the
factorization of $w$ stops at $w^{(0)}$. Note that because $u$
marked a maximal repetition, $a_0$ is not a prefix of $b_0$ nor vice
versa in this case.

The factorization of $w$ once again stops at $w^{(0)}$ when $C$ is non-coherent. Note that in this case the two appearances of the non-coherent repetition form the beginning and end of $b_0$. In particular, the first letter of $b_0$ equals the inverse of its last letter, so it cannot equal the first letter of $a_0$ (as $w$ is cyclically reduced).

The factorization process continues only when $C$ is (coherent and) overlapping. Recall that in this case, $v_x$ and $v_y$ where chosen so that $|a_0| = |v_x \to v_y| \leq |v_y \to v_x| = |b_0|$. But $a_0$ and $b_0$ cannot be of equal length, as this would yield they are both prefixes of $u$ and thus equal, contradicting $w$'s not being a power. So $|a_0|<|b_0|$ and $a_0$ is actually a prefix of $b_0$. Then, denote by $b_1$ the remaining suffix of $b_0$: $b_1 := g_{i_{y+|a_0|+1}}^{\alpha_{y+|a_0|+1}} \ldots g_{i_x}^{\alpha_x}$. We also define $a_1$ to equal $a_0$. We get that we can also write $w_x$ as the expression $w^{(1)}$ in the subwords $a_1, b_1$:
\[
w^{(1)} :=  a_1^{\;2} b_1 = w_x
\]
This factorization is depicted in Figure \ref{fig:w1}.

\begin{figure}[h]
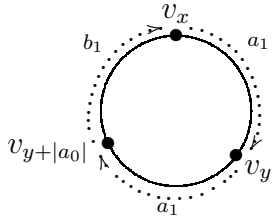

\centering
\xy
(10,10)*\xycircle(10,10){};
(10,20   )*+{\bullet}="x"+(0,3)*{v_x};;
(18, 4   )*+{\bullet}="y"+(3,-2)*{v_y};;
( 1, 5.65)*+{\bullet}="z"+(-8,-1)*{v_{y+|a_0|}};;
(12,21);(20,5) **\crv{~*=<4pt>{.} (20,20)&(23,10) } ?<(1)*\dir{>};
(21,19)*{\scriptstyle a_1};
(18,2);(0,4) **\crv{~*=<4pt>{.} (10,-6)&(1,1) } ?<(1)*\dir{>};
(9,-3)*{\scriptstyle a_1};
(-1,6);(8,21) **\crv{~*=<4pt>{.} (-3,10)&(0,20) } ?<(1)*\dir{>};
(-1,19)*{\scriptstyle b_1};
\endxy
\caption{A sketch of the factorization $w^{(1)}:~ w_x=a_1^{\;2}b_1$ as seen in the graph $\Gtilde_w$.}\label{fig:w1}
\end{figure}

From $w^{(1)}$ we continue recursively to obtain more refined
factorizations of $w_x$, until we get a factorization which enables
us to accurately describe the quotient graph $f(C)$. This recursive
process generalizes the step we made to obtain $w^{(1)}$ from
$w^{(0)}$ (including the very decision of whether we continue at all
the factorization process or stop at $w^{(0)}$). Each factorization
$w^{(n)}$ will consist of an expression in two non-identical
subwords $a_n$ and $b_n$. Now assume we have constructed the
factorization $w^{(n)}$ ($n\geq0$). We continue by the following
scenarios:
\begin{itemize}
  \item If $a_n$ is not a prefix of $b_n$ and vice versa, stop.
  \item If $a_n$ is a prefix of $b_n$, construct $w^{(n+1)}$ by defining $a_{n+1}:=a_n,~b_{n+1}:=a_n^{\;-1}b_n$ (reduced, so $|b_{n+1}| = |b_n| - |a_n|$). Then the expression of $w^{(n+1)}$ is obtained from $w^{(n)}$ by replacing each $a_n$ with $a_{n+1}$ and each $b_n$ with $a_{n+1}b_{n+1}$.
  \item If $b_n$ is a prefix of $a_n$, construct $w^{(n+1)}$ by defining $a_{n+1}:=b_n,~b_{n+1}:=b_n^{\;-1}a_n$ (reduced, so $|b_{n+1}|=|a_n|-|b_n|$). Then the expression of $w^{(n+1)}$ is obtained from $w^{(n)}$ by replacing each $a_n$ with $a_{n+1}b_{n+1}$ and each $b_n$ with $a_{n+1}$.
\end{itemize}

For example, if $a_1$ is a prefix of $b_1$, we have
$w^{(2)}=a_2^{\;3}b_2$. If $b_1$ is a prefix of $a_1$, we have
$w^{(2)}=a_2b_2a_2b_2a_2$. Note that this process always ends
because $|a_n|+|b_n|$ keeps decreasing in every step. Denote by $N$
the ordinal of the last step (so $w^{(N)}$ is the expression for
which the process halts). In particular, $N=0$ whenever $C$ does not
imply any other component, i.e. $C$ does not correspond to an
overlapping (coherent) repetition.

For each $n=0,1,\ldots,N$ we let $\G^{(n)}$ denote the (not necessarily realizable) quotient graph of $\Gtilde_w$ which has the topological structure of Figure-Eight, with one loop corresponding to $a_n$ and the other corresponding to $b_n$. This is illustrated in Figure \ref{fig:gamma_n}.

\begin{figure}[h]
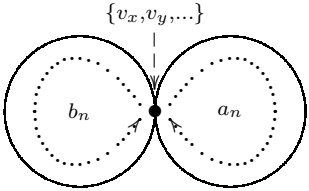

\centering
\xy
(10,10)*\xycircle(10,10){};
(30,10)*\xycircle(10,10){};
(20,10)*+{\bullet}="v1";
(20,23)*+{\scriptstyle \{v_x,v_y,\ldots\}}="t1";
{\ar@{-->} "t1";(20,13)};
(18,11);(18,9) **\crv{~*=<4pt>{.} (10,22)&(-2,10)&(10,-2)} ?<(1)*\dir{>};
(10,10)*{\scriptstyle b_n};
(22,11);(22,9) **\crv{~*=<4pt>{.} (30,22)&(42,10)&(30,-2)} ?<(1)*\dir{>};
(30,10)*{\scriptstyle a_n};
\endxy
\caption{A sketch of the (not necessarily realizable) quotient graph
$\G^{(n)}$. The vertices of this graph can be thought of as a
partition of the vertices of $\Gtilde_w$, and then the block
corresponding to the drawn vertex contains, among others, $v_x$ and
$v_y$ (when $n=0$ this block is exactly
$\{v_x,v_y\}$).}\label{fig:gamma_n}
\end{figure}

The graphs $\G^{(n)}$ can be thought of as phases in the process of generating $f(C)$, the realizable quotient graph of $\Gtilde_w$, from the set of pairs $\{\{v_x,v_y\}\}$. (This process is equivalent to the one describe in Figure \ref{fig:generating_set_of_pairs}, although we now confine ourselves to a specific order of merging). The following lemma states this in a more formal manner. In the lemma, we use the notations $h(v)$ and $t(v)$ to describe the first and last letters, respectively, of a word $v$ (with the right power $\pm1$, e.g. $h(w)=g_{i_1}^{\alpha_1}~$).

\begin{lemma} \label{lem:properties}
For every $n=0,1,\ldots,N$, the expression $w^{(n)}$, the corresponding subwords $a_n,b_n$ and the graph $\G^{(n)}$ satisfy the following list of properties:
\begin{enumerate}
  \item $w^{(n)}$ expresses $w_x$ in $\S^*$ (i.e. the equality $w^{(n)}=w_x$ holds without reductions).
  \item $a_n \ne b_n$
  \item $b_n$ does not appear twice in a row, cyclically, in $w^{(n)}$.
  \item if $n\ge1$, $a_n$ does appear twice in a row, cyclically, in $w^{(n)}$; $a_0$ appears exactly once in $w^{(0)}$.
  \item $t(a_n)^{-1}\ne h(b_n),~t(b_n)^{-1}\ne h(a_n),~t(a_n)^{-1}\ne h(a_n),~t(a_n)\neq t(b_n)$
  \item The graph $f(C)$ is a quotient of the graph $\G^{(n)}$, i.e. $f(C)$ represents a partition of the vertices of $\Gtilde_w$ which is coarser than the one of $\G^{(n)}$. Put differently, for every two vertices $v_i,v_j$ of
  $\Gtilde_w$
  \[ v_i\stackrel[\G^{(n)}]{}{\equiv} v_j ~~~ \Longrightarrow ~~~ v_i\stackrel[f(C)]{}{\equiv} v_j \]
\end{enumerate}
\end{lemma}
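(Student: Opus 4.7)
My plan is to prove all six properties simultaneously by induction on $n$.

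For the base case $n=0$, properties 1, 2, 3 and the clause of 4 concerning $a_0$ are immediate from the definition $w^{(0)} = a_0 b_0$: it equals $w_x$ without reduction, the symbolic cyclic word over $\{A,B\}$ encoding $w^{(0)}$ is just $AB$, and an equality $a_0 = b_0$ would make $w_x = a_0^{\,2}$, contradicting the non-power assumption on $w$. Property 6 at $n=0$ holds because $\Gamma^{(0)}$ arises from $\Gtilde_w$ by identifying $v_x$ with $v_y$ and nothing else, while $f(C)$ is by definition the realizable closure of exactly that identification. The bulk of the base-case work is property 5, which I would handle by case analysis on the type of the component $C$. In all three cases the inequalities $t(a_0)^{-1} \neq h(b_0)$ and $t(b_0)^{-1} \neq h(a_0)$ are just the cyclic reducedness of $w$ at the two junctions of the cyclic word $a_0 b_0$. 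The other two inequalities follow, in the coherent case, from maximality of the repetition together with the observation that $h(a_0) = h(b_0)$ (both $u$-occurrences begin with the same letter); in the non-coherent case from a combination of cyclic reducedness and a maximality argument for simultaneously extending the $u$- and $u^{-1}$-appearances; and in the isolated case from the fact that an equality would produce a same-colored pair of edges with endpoints in $\{v_x, v_y\}$, yielding an $\U$-edge incident to $\{v_x,v_y\}$ and contradicting the isolation of $C$.

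For the inductive step I assume all six properties at $n$ and consider case (a) in which $a_n$ is a prefix of $b_n$; case (b) is entirely symmetric. Property 1 holds because $b_n = a_{n+1} b_{n+1}$ is a literal concatenation in $\S^*$ (no cancellation, since $a_n$ is already a prefix), so applying the substitution $a_n \mapsto a_{n+1}$, $b_n \mapsto a_{n+1} b_{n+1}$ to the equality $w^{(n)} = w_x$ yields $w^{(n+1)} = w_x$ in $\S^*$. Property 2 is forced: $a_{n+1} = b_{n+1}$ would give $b_n = a_n^{\,2}$, turning $w^{(n+1)}$ into a word in the single symbol $a_{n+1}$ and making $w$ a proper power. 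Properties 3 and 4 are a direct combinatorial check on the symbolic substitution $A \mapsto A$, $B \mapsto AB$ applied to the cyclic word encoding $w^{(n)}$: every $B$ in the encoding of $w^{(n+1)}$ is immediately preceded (within its substitute block) and immediately followed (by the first symbol of the next block) by an $A$, so no $BB$ appears; and every $AB$ substring in the encoding of $w^{(n)}$, which must exist by property 3 at level $n$, becomes $A \cdot AB$ in the encoding of $w^{(n+1)}$, supplying the required $AA$.

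Property 5 at $n+1$ reduces, via the identities $t(a_{n+1}) = t(a_n)$, $h(a_{n+1}) = h(a_n)$, $t(b_{n+1}) = t(b_n)$, and the identification of $h(b_{n+1})$ with the letter of $b_n$ immediately after its prefix $a_n$, to either a sub-condition of property 5 at $n$ or to the cyclic reducedness of $w_x$ at a specific junction internal to $w^{(n+1)}$. Property 6 is the geometric crux: $\Gamma^{(n+1)}$ can be obtained from $\Gamma^{(n)}$ by identifying, along the $b_n$-loop, the vertex at distance $|a_n|$ from the central vertex with the central vertex itself, and then closing under realizability. This identification is already forced in $f(C)$ because $f(C)$ is a realizable quotient containing $v_x \equiv v_y$, and starting from two $\equiv$-identified vertices of $\Gtilde_w$ and reading a common word necessarily leads to two further $\equiv$-identified vertices. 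I expect the main obstacle to lie in this interplay between properties 5 and 6: one must verify that the realizability closure on $\Gtilde_w$ exactly encodes the Euclidean-algorithm-like refinement producing $(a_{n+1}, b_{n+1})$ from $(a_n, b_n)$, so that every identification demanded by the transition $\Gamma^{(n)} \to \Gamma^{(n+1)}$ is already implied by realizability and the initial identification $v_x \equiv v_y$.
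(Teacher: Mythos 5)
Your proposal follows essentially the same path as the paper's proof: simultaneous induction on $n$, with the base case for property (5) handled by case analysis on whether $C$ is isolated, coherent, or non-coherent, and property (6) proved by tracking the forced realizability merges that turn $\G^{(n)}$ into $\G^{(n+1)}$. The only cosmetic differences are that the paper establishes three of the four inequalities in property (5) directly by observing that $a_nb_n$, $b_na_n$, and (for $n\ge1$) $a_na_n$ occur as cyclic subwords of the cyclically reduced $w$ rather than by a full induction, and the paper spells out the merge chain for property (6) explicitly rather than phrasing it as a single identification followed by realizability closure.
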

\begin{proof}
Property $(2)$ holds lest we obtain that $w$ is a power of $a_n$,
contradicting the assumption that $\f(w)\geq2$. Property $(1)$ holds
for $n=0$ and the recursive process clearly makes it hold for every
$n$. Properties $(3)$ and $(4)$ are valid for $n=0,1$, and can be
shown to hold for every $n=2,\ldots,N$ by simple induction.
$t(a_n)^{-1}\ne h(b_n)$ and $t(b_n)^{-1}\ne h(a_n)$ because $a_n
b_n$ and $b_n a_n$ appear as cyclic subwords of the cyclically
reduced word $w$.

When $C$ is non-coherent, the choice of $v_x$ and $v_y$ guarantees that $t(a_0)^{-1} \neq h(a_0)$. When $C$ is an isolated vertex, $t(a_0)^{-1} \neq h(a_0)$ lest $C$ could not be isolated. If $C$ is coherent, then $t(a_0)^{-1}=g_{i_y}^{-\alpha_y}\ne g_{i_{y+1}}^{\alpha_{y+1}} = h(a_0)$. When $n\geq1$, $a_n a_n$ appears as cyclic subwords of the cyclically reduced word $w$, and so $t(a_n)^{-1}\ne h(a_n)$.

When $C$ is non-coherent, $b_0$ is reduced but not cyclically reduced and thus $t(b_0) = h(b_0)^{-1} \neq t(a_0)$. When $C$ is isolated or coherent, $t(a_0) \neq t(b_0)$ because $v_x$ and $v_y$ mark the \emph{beginning} endpoints of the maximal repetition corresponding to $C$. That this property continues to hold for every $n=1,\ldots,N$ is clear because as sets $\{t(a_{n+1}),t(b_{n+1})\}=\{t(a_n),t(b_n)\}$, and we obtain $(5)$.

Property $(6)$ obviously holds for $n=0$ (the only pair of vertices of $\Gtilde_w$ which are equivalent in $\G^{(0)}$ are $v_x$ and $v_y$). Now assume it is true for some $n$. If $b_n$ is a prefix of $a_n$, $\G^{(n+1)}$ is obtained from $\G^{(n)}$ by $|b_n|$ necessary merges: we first merge the first edge and vertex of the loop representing $a_n$ with the first edge and vertex of the loop representing $b_n$, because both edges, while sharing a common end-point, are labeled and directed according to $h(b_n)=h(a_n)$. We then merge the second edges (and vertices) of both loops for the same reason, etc. After $|b_n|$ merges of this kind, we obtain exactly $\G^{(n+1)}$. Obviously, the case where $a_n$ is a prefix of $b_n$ is completely equivalent. This completes the proof of property $(6)$.
\end{proof}

Recall that we aim to describe the quotient graph $f(C)$. $\G^{(N)}$ is a good approximate, but some few extra merges might be needed to obtain a realizable quotient graph. We do know that $a_N$ and $b_N$ are cyclic subwords of $w$, so they are both reduced. Thus, the only vertex in $\G^{(N)}$ which might have two equally labeled edges exiting it (or entering it) is the one of degree $4$ which marks the endpoints of $a_N$ and $b_N$. Four edges are incident with this vertex: $h(a_N), t(a_N)^{-1}, h(b_N), t(b_N)^{-1}$. Property $(5)$ of Lemma \ref{lem:properties} shows that there are three possibilities for equality relations among these four edges: either they are all different, or the only equality is $h(a_N) = h(b_N)$, or the only equality is $h(b_N) = t(b_N)^{-1}$. (These two equalities cannot coexist because this would yield that $h(a_N) = t(b_N)^{-1}$, contradicting Property $(5)$).

In the first case, $\G^{(N)}$ is realizable, and by Property $(6)$ we obtain $f(C) = \G^{(N)}$. This is a Figure-Eight graph. In the second case, $a_N$ and $b_N$ have some common prefix $c_N$. But $a_N$ and $b_N$ are not prefixes of each other, so this common prefix is strictly shorter than both of them. $f(C)$ has therefore Theta-shape. Finally, in the last case, $b_N$ is reduced but not cyclically reduced, so we can write $b_N$ as $d_N e_N d_N^{-1}$, with $e_N$ cyclically reduced. The shape of $f(C)$ in this case is that of a Barbell. Those three cases are illustrated in Figure \ref{fig:f(C)}.

\begin{figure}[h]
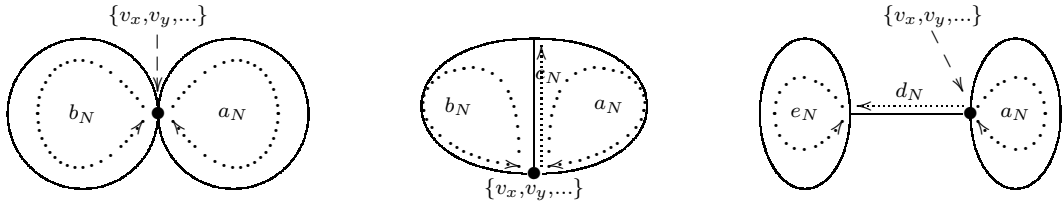

\centering
\xy
(10,10)*\xycircle(10,10){};
(30,10)*\xycircle(10,10){};
(20,10)*+{\bullet}="v1";
(20,23)*+{\scriptstyle \{v_x,v_y,\ldots\}}="t1";
{\ar@{-->} "t1";(20,13)};
(18,11);(18,9) **\crv{~*=<4pt>{.} (10,22)&(-2,10)&(10,-2)} ?<(1)*\dir{>};
(10,10)*{\scriptstyle b_N};
(22,11);(22,9) **\crv{~*=<4pt>{.} (30,22)&(42,10)&(30,-2)} ?<(1)*\dir{>};
(30,10)*{\scriptstyle a_N};
(70,2)*{\bullet}="v2"+(0,-2)*{\scriptstyle \{v_x,v_y,\ldots\}};
(70,20)*{}="t2";
"v2";"t2" **\dir{-};
"t2";"v2" **\crv{(60,20)&(50,11)&(60,2)};
"t2";"v2" **\crv{(80,20)&(90,11)&(80,2)};
{\ar@{.>} (71,3);(71,19)};
(72,15)*{\scriptstyle c_N};
(68,5);(68,3) **\crv{~*=<4pt>{.} (68,15)&(60,18)&(52,11)&(60,4)} ?<(1)*\dir{>};
(60,11)*{\scriptstyle b_N};
(72,5);(72,3) **\crv{~*=<4pt>{.} (72,15)&(80,18)&(88,11)&(80,4)} ?<(1)*\dir{>};
(80,11)*{\scriptstyle a_N};
(128,10)*{\bullet}="v3"+(-5,13)*{\scriptstyle \{v_x,v_y,\ldots\}};
{\ar@{-->} (123,21);(127,13)};
(112,10)*{}="t3";
"v3";"t3" **\dir{-};
(106,10)*\xycircle(6,10){};
(134,10)*\xycircle(6,10){};
(129,11);(129,9) **\crv{~*=<4pt>{.} (134,18)&(142,10)&(134,2)} ?<(1)*\dir{>};
(134,10)*{\scriptstyle a_N};
{\ar@{.>} (127,11);(113,11)};
(120,13)*{\scriptstyle d_N};
(111,11);(111,9) **\crv{~*=<4pt>{.} (106,18)&(98,10)&(106,2)} ?<(1)*\dir{>};
(106,10)*{\scriptstyle e_N};
\endxy
\caption{The three possible shapes of the quotient graph
$f(C)\in\Q_{w,2,A}$, when $C\in\Co(\U)$. The left graph is
Figure-Eight and it corresponds to the case where $f(C)$ is exactly
the graph $\G^{(N)}$. The middle graph is Theta-shaped, and it
depicts the case where $a_N$ and $b_N$ have a common prefix $c_N$.
The Barbell graph on the right corresponds to the case where $b_N$
is reduced but not cyclically reduced, and can be expressed as
$d_Ne_Nd_N^{-1}$ with $e_N$ cyclically reduced.}\label{fig:f(C)}
\end{figure}

\subsubsection{The Inverse Function of \texorpdfstring{$f$}{f}} \label{sbsbs:inverse_f}

We now have all the machinery necessary for suggesting a definition
for the inverse function of $f$: \[h~:~\Q_{w,2,A}\to \Co(\U)\] Let
$\G\in\Q_{w,2,A}$ be some quotient graph. Recall that $f$ is
surjective, so $\G = f(C)$ for some $C \in \Co(\U)$. The definition
of $h$ is based on a simple algorithm that recovers, in a
deterministic fashion and without preknowledge of $C$, the only
possible chain of expressions $w^{(N)}, \ldots, w^{(0)}$ (in this
reversed order, and up to some cyclic shifts), and eventually also
the only possible component $C$ that yielded $\G$. This will show
that $f$ indeed has an inverse function. \\

By the discussion in Section \ref{sbsbs:factorization}, $\G$ is of
one of the three shapes in Figure \ref{fig:f(C)}. Since we can trace
the path of $w$ along $\G$, we can determine by Properties $(3)$ and
$(4)$ of Lemma \ref{lem:properties} if we are in the case $N=0$ or
not.

Assume first that we are in the case $N>0$. Then, by Properties
$(3)$ and $(4)$ of \ref{lem:properties} we can recover $a_N$ and
$b_N$ and also distinguish between them. We can also locate the
vertex of $\G$ corresponding to $\{v_x,v_y,\ldots\}$ (see Figure
\ref{fig:f(C)}). We can thus recover the graph $\G^{(N)}$, and up to
a cyclic shift also $w^{(N)}$.

Note that in the recursive process in which we construct $w^{(N)}$,
whenever $a_n$ is a prefix of $b_n$ ($n\geq1$), we get that
$a_{n+1}$ appears (at least) thrice in a row, cyclically, in
$w^{(n+1)}$, while if $b_n$ is a prefix of $a_n$, then $a_{n+1}$
appears twice in a row but not thrice, cyclically, in $w^{(n+1)}$.
When $N\geq1$, $a_0$ is a prefix of $b_0$. Therefore, given
$w^{(n+1)}$ (even up to a cyclic shift), for $n\geq0$, we can always
determine which of the two kinds of steps in the recursive process
was used to obtain $w^{(n+1)}$ from $w^{(n)}$. Since these steps are
easily reversible, we can recover $w^{(n)}$ from $w^{(n+1)}$ (up to
the same cyclic shift). (E.g., if the second kind of recursive step
was used to obtain $w^{(n+1)}$ from $w^{(n)}$, then we can recover
$a_n$ and $b_n$ from $a_{n+1},b_{n+1}$ by defining $b_n:=a_{n+1},
~a_n:=a_{n+1}b_{n+1}$. $w^{(n)}$ can be recovered from $w^{(n+1)}$
by replacing each appearance of the sequence $a_{n+1}b_{n+1}$ with
$a_n$, and every other appearance of $a_{n+1}$ with $b_n$.)

Thus, when $N>0$ we can recover $w^{(0)}$ up to some cyclic shift.
The beginning points of $a_0$ and $b_0$ in $\Gtilde_w$ are the
vertices $v_x$ and $v_y$. We let $h(\G)$ be the component
$C\in\Co(\U)$ that contains the vertex $\{v_x,v_y\}$.

Finally, when $N=0$, it is easy to observe that all the pairs of
vertices of $\Gtilde_w$ that are equivalent in $\G$, all belong to
the same component of $\U$. We let $h(\G)$ be this component.

For every $C\in\Co(\U)$, we see therefore that $h$ manages to
recover $C$ from $f(C)$, so $h(f(C))=C$. Since $f$ was shown to be
surjective, it follows that $h$ is the inverse of $f$. Thus
$|Q_{2,w,A}|=|\Co(\U)|$, and we obtain the desired equality in
\eqref{eq:size_of_Qw2a}. This proves Lemma \ref{lem:balancing}, and
thus completes the proof of Theorem \ref{ther:phi=beta=2} and of
Corollary \ref{cor:E>1-1/n^2}.

\bibliographystyle{amsalpha}
\bibliography{PhiBeta}{}

\end{document}